\def\@logofont{\footnotesize}
\def\@setaddresses{\par
  \nobreak \begingroup
  \footnotesize
  \def\author##1{\nobreak\addvspace\bigskipamount}%
  \def\\{\par\nobreak}%
  \interlinepenalty\@M
  \def\address##1##2{\begingroup
    \par\addvspace\bigskipamount\indent
    \@ifnotempty{##1}{(\ignorespaces##1\unskip) }%
    {\scshape\ignorespaces##2}\par\endgroup}%
  \def\curraddr##1##2{\begingroup
    \@ifnotempty{##2}{\nobreak\indent\curraddrname
      \@ifnotempty{##1}{, \ignorespaces##1\unskip}\/:\space
      ##2\par}\endgroup}%
  \def\email##1##2{\begingroup
    \@ifnotempty{##2}{\nobreak\indent\emailaddrname
      \@ifnotempty{##1}{, \ignorespaces##1\unskip}\/:\space
      \ttfamily##2\par}\endgroup}%
  \def\urladdr##1##2{\begingroup
    \def~{\char`\~}%
    \@ifnotempty{##2}{\nobreak\indent\urladdrname
      \@ifnotempty{##1}{, \ignorespaces##1\unskip}\/:\space
      \ttfamily##2\par}\endgroup}%
  \addresses
  \endgroup
}
\renewcommand*\subjclass[2][2010]{%
  \def\@subjclass{#2}%
  \@ifundefined{subjclassname@#1}{%
    \ClassWarning{\@classname}{Unknown edition (#1) of Mathematics
      Subject Classification; using '2000'.}%
  }{%
    \@xp\let\@xp\subjclassname\csname subjclassname@#1\endcsname
  }%
}
\def\BState{\State\hskip-\ALG@thistlm}
\theoremstyle{plain}
\newtheorem{theorem}{Theorem}[section]
\newtheorem{proposition}[theorem]{Proposition}
\newtheorem{lemma}[theorem]{Lemma}
\newtheorem{corollary}[theorem]{Corollary}
\newtheorem{conjecture}[theorem]{Conjecture}
\theoremstyle{definition}
\newtheorem{definition}[theorem]{Definition}
\newtheorem{example}[theorem]{Example}
\newtheorem{remark}[theorem]{Remark}
\begin{document}
\title{Conditions for matchability in groups and field extensions II}

\author[M. Aliabadi]{Mohsen Aliabadi}
%\address{Mohsen Aliabadi \\ 
%Department of Mathematics\\ Iowa State University}% 
%\email{aliabadi@iastate.edu}

%\subjclass{Primary: 05D15; Secondary: 11B75, 20D60.}
\thanks{Keywords and phrases. Chowla subset, matchable subsets, matching property, Sidon subset}
\thanks{2020 Mathematics Subject Classification. Primary: 05D15; Secondary: 11B75; 12F10}
\thanks{Department of Mathematics, University of California, San Diego, 9500 Gilman Dr, La Jolla, Ca 92093, USA, \texttt{maliabadisr@ucsd.edu}, ORCID: 0000-0001-5331-2540 .}

\begin{abstract}
We present sufficient conditions for the existence of matchings in abelian groups and their linear counterparts. These conditions lead to extensions of existing results in matching theory. Additionally, we classify subsets within abelian groups that cannot be matched. We introduce the concept of Chowla subspaces and formulate and conjecture a linear analogue of a result originally attributed to Y. O. Hamidoune \cite{19} concerning Chowla subsets. If proven true, this result would extend matchings in primitive subspaces. Throughout the paper, we emphasize the analogy between matchings in abelian groups and field extensions. We also pose numerous open questions for future research. Our approach relies on classical theorems in group theory, additive number theory and linear algebra. As the title of the paper suggests, this work is the second sequel to a previous paper \cite{5} with a similar theme. The paper is self-contained and can be read independently.

\end{abstract}
\maketitle
 \section{Introduction}
 Let $A$ and $B$ be finite subsets of the multiplicative abelian group $G$ with the same cardinality. A \textit{matching} from $A$ to $B$ is a bijection $f:A\to B$ such that $af(a)\notin A$, for every $a\in A$. 
 In this case, we say $A$ is \textit{matched} (or \textit{matchable}) to $B$. 
 The notion of matchings in abelian groups was initially introduced in \cite{16} in order to generalize a geometric property of lattices in Euclidean space. %In particular, matchings were used to tackle a linear algebra problem concerning finding sets of monomials which are removable from a generic homogenous polynomial through a linear change in its variables \cite{}.
 Their goal was to demonstrate that a suitably selected fixed set of monomials, if sufficiently small, could be removed from a generic homogeneous polynomial through a suitable linear change of coordinates. The inquiry into sets of removable monomials from generic homogeneous polynomials via linear transformations has historical origins dating back to \cite{28}.
Matchings have been extensively studied in the context of abelian groups \cite{6,24} as well as in arbitrary group settings \cite{14}. The linear version of matchings is introduced in \cite{15} and further investigated in \cite{2,4}. A matroidal analogue of matchings is introduced and explored in \cite{Shira}. For enumerative aspects of matchings, see \cite{19}. There are still many fascinating open problems in this area. This paper continues to study some problems motivated in \cite{2,5}.
\\
 Evidently, it is necessary for the existence of a matching from $A$ to $B$ that $|A|=|B|$ and that $e\notin B$ (here $e$ denotes the neutral element of $G$). One says that a group $G$ possesses the \textit{matching property} if these necessary conditions are sufficient as well. 
%The main feature of these two calsses of groups is not having any nontrivial finite subgroup. Having said this, it is observed. 
 It is shown in \cite{24} that an abelian group satisfies the matching property if and only if it is either torsion-free or cyclic of prime order. In \cite{2}, it is observed that the existence of a nontrivial proper finite subgroup serves as an obstruction to the matching property in general abelian groups. In particular, the following result is established in \cite{2}.

\begin{theorem}\label{tf}
Let $G$ be an abelian group and $A$ be a finite subset of $G$ which does not contain any coset of any proper nontrivial finite subgroup of $G$. Then for every finite subset $B$ of $G$ with $|A|=|B|$ and $e\notin B$, there is a matching from $A$ to $B$.
\end{theorem}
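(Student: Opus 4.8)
The plan is to translate the existence of a matching into a bipartite-graph condition, apply Hall's marriage theorem, and then use Kneser's addition theorem to control when Hall's condition can fail. Form the bipartite graph on vertex classes $A$ and $B$, joining $a\in A$ to $b\in B$ precisely when $ab\notin A$; a matching from $A$ to $B$ in the sense of the paper is exactly a perfect matching saturating $A$ in this graph. Since $|A|=|B|$, by Hall's theorem it suffices to verify that $|N(S)|\ge|S|$ for every $S\subseteq A$, where $N(S)=\{b\in B:\exists\,a\in S,\ ab\notin A\}$. Writing $T=B\setminus N(S)=\{b\in B: Sb\subseteq A\}$, Hall's condition fails for $S$ exactly when $|S|+|T|>|A|$. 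First I would record the facts that make the hypothesis $e\notin B$ usable: since $e\notin B$ we have $e\notin T$, while by construction $S\subseteq A$ and $ST\subseteq A$.

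Assume, for contradiction, that Hall's condition fails for some necessarily nonempty $S$, with associated $T\neq\emptyset$. Then $ST\subseteq A$ gives $|ST|\le|A|<|S|+|T|$, hence $|ST|\le|S|+|T|-1$. Now apply Kneser's theorem to the product set $ST$: if $H=\{g\in G: g\,ST=ST\}$ denotes its stabilizer, then $H$ is a finite subgroup of $G$ and $|ST|\ge|SH|+|TH|-|H|\ge|S|+|T|-|H|$. Combining the two inequalities yields $|H|\ge1$, and whenever $H$ is nontrivial the set $ST$ — being a union of cosets of $H$ and contained in $A$ — forces $A$ to contain a coset of the proper nontrivial finite subgroup $H$, contradicting the hypothesis. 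The degenerate possibility $H=G$ can arise only when $G$ is finite and $A=G$, which is incompatible with $e\notin B$ and $|B|=|A|$; I would dispatch this at the outset.

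The remaining, genuinely delicate, case is when $H$ is trivial. Here the two inequalities collapse to the equality $|ST|=|S|+|T|-1$, and since $ST\subseteq A$ with $|ST|=|A|$, to $A=ST$; thus $(S,T)$ is a \emph{critical pair} in the extremal regime of Kneser's theorem. The main obstacle is to extract a contradiction from the extra data $S\subseteq ST$ and $e\notin T$ in this boundary case. The degenerate subcases are immediate: if $|S|=1$, then $S\subseteq ST=sT$ forces $e\in T$; and if $T=\{t\}$, then $A=St$ with $S=St$ forces $t$ to stabilize $S$, so that $\langle t\rangle$ is a nontrivial finite subgroup a coset of which lies in $A$ — both outcomes contradicting our standing assumptions.

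For $|S|,|T|\ge2$ I would invoke the structure theorem for critical pairs — Vosper's theorem when $G$ is cyclic of prime order, and Kemperman's structure theorem in general — which pins $S$ and $T$ down to arithmetic-progression-type configurations sharing a common difference. The key mechanism to exploit is that the containment $S\subseteq ST$ forces $T$ to \emph{straddle the identity}: after reducing to the progression picture, the requirement that the extreme elements of $S$ lie in $ST$ constrains the common-difference progression $T$ to contain $e$. Since $e\notin T$, this is the desired contradiction, completing the verification of Hall's condition and hence producing the matching. I expect this extremal analysis to be the crux: the Kneser step disposes of all \emph{periodic} obstructions cleanly, and the aperiodic boundary case is precisely where the hypothesis $e\notin B$ must be spent.
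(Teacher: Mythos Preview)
Your overall strategy---Hall plus Kneser---is exactly what the paper cites for this result (the theorem is imported from \cite{2}, and the paper says the proof ``relied on Kneser's addition theorem in conjunction with Hall's Marriage Theorem''). The paper's own machinery, used throughout its proof of Theorem~\ref{t1} and implicitly here, differs from yours in one decisive move: it applies Kneser to $S$ and $W:=T\cup\{e\}$ rather than to $S$ and $T$. Since $e\notin T$ one has $|W|=|T|+1$, and $SW=S\cup ST\subseteq A$; a failure of Hall then gives $|SW|\le|A|<|S|+|T|=|S|+|W|-1$, so Kneser forces the stabilizer of $SW$ to be \emph{nontrivial}, and $A\supseteq SW$ contains a coset of a proper nontrivial finite subgroup (the case $H=G$ being excluded as you noted). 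No boundary case ever arises.

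Your route through $T$ alone leaves the aperiodic boundary $|ST|=|S|+|T|-1$, and your plan to dispatch it via Kemperman is where the proposal has a genuine gap. Kemperman's structure theorem for general abelian groups does \emph{not} assert that a critical pair consists of two arithmetic progressions with a common ratio; its conclusion is a recursive description involving quasi-periodic pieces and several elementary pair types, and extracting $e\in T$ from $S\subseteq ST$ across all those cases is far from the one-line argument your sketch suggests. In fact your own setup already contains the clean fix, with no structure theory required: in the boundary case $S\subseteq ST$ gives $SW=ST$, hence $SW$ has trivial stabilizer, and Kneser applied to $(S,W)$ yields $|ST|=|SW|\ge|S|+|W|-1=|S|+|T|$, contradicting $|ST|=|S|+|T|-1$. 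Adjoining $e$ to $T$ is precisely how the paper spends the hypothesis $e\notin B$, and it replaces your entire extremal analysis.
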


Theorem \ref{tf} was primarily established using methods from additive number theory and combinatorics. Specifically, it relied on Kneser's addition theorem (Theorem \ref{tK}) in conjunction with Hall's Marriage Theorem. These theorems served as the key components in the classification of the matching property. Initially, they were employed in \cite{24} and later adapted in \cite{2} to prove Theorem \ref{tf}.

In this paper, we first present additional sufficient conditions for the existence of
matchings in abelian groups, wherein structures from additive number theory, such
as Kemperman’s structure theorem and Sidon subsets play prominent roles. 
In continuation of Theorem 1.1 in \cite{5}, where umatchable subsets in the group setting are addressed our main
results concerning catchable subsets appear as Theorems \ref{t1} and \ref{t2}.
All the necessary definitions and concepts from additive number theory are provided
in Section 2. Note that in Theorem \ref{t1}, $p(G)$ stands for the smallest cardinality of a
nontrivial subgroup of $G$. If $G$ is finite, then $p(G)$ equals the smallest prime divisor
of $|G|$. On the other hand, $p(G) =\infty$ if and only if $G$ is torsion free.

\begin{theorem}\label{t1}
Let $G$ be any abelian group, $A$ and $B$ be finite subsets of $G$ with the same cardinality $n$ and $e\notin B$. Assume further that one of the following conditions holds:
\begin{enumerate}
\item
$B\cap H=\emptyset$, for every proper finite subgroup $H$ of $G$.
\item
$A$ and $B$ are not contained in a union of $l$ cosets of any nontrivial proper finite subgroup of $G$ and $l\geq n-1$.
\item
$A$ and $B$ are not contained in a union of $l$ cosets of any nontrivial proper finite subgroup of $G$, $l\geq n-2$, $|\{a^n|a\in A\}|>2$ and $B$ has at least two elements whose orders are greater than $n$. 
\item
$p(G)=n$, and $A$ is not a coset of any nontrivial subgroup of $G$.
\item
$A$ is a Sidon subset.
\item
The order of every element of $B$ is greater than or equal to $n$ and $A$ is not a progression.
\item
For every nontrivial finite subgroup $H$ of $G$ and elements  $a,b\in G$, $|aH\cap A|+|bH\cap B|<|H|+1$.
\end{enumerate}
Then $A$ is matched to $B$.
\end{theorem}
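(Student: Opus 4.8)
The plan is to run the Hall--Kneser machine that already underlies Theorem~\ref{tf}, but to read off from a failure of Hall's condition a single quantitative coset inequality, and then to check that each of (1)--(7) forbids it. First I would form the bipartite graph $\Gamma$ on parts $A$ and $B$ in which $a$ is joined to $b$ exactly when $ab\notin A$; a matching from $A$ to $B$ is precisely a perfect matching of $\Gamma$, so by Hall's Marriage Theorem it suffices to verify $|N(S)|\ge|S|$ for every $S\subseteq A$. For $S\subseteq A$ set $D=\{b\in B: Sb\subseteq A\}$, so that $N(S)=B\setminus D$; a failure of Hall at $S$ therefore means $|D|>n-|S|$. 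Assuming for contradiction that no matching exists and fixing such an $S$ (necessarily $S,D\neq\emptyset$), we record
\[
|S|+|D|\ \ge\ n+1,\qquad SD\subseteq A,\qquad D\subseteq B .
\]
Writing $H=\mathrm{Stab}(SD)$ and applying Kneser's theorem (Theorem~\ref{tK}) to the product $SD\subseteq A$ gives
\[
|H|\ \ge\ |S|+|D|-|SD|\ \ge\ |S|+|D|-n\ \ge\ 1 ,
\]
and, since $SD$ is a nonempty union of cosets of $H$, the set $A$ contains at least one full coset of $H$ whenever $|H|\ge 2$.

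The easy conditions are (2), (4) and (5), which I would dispatch by reducing them to Theorem~\ref{tf}: each forces $A$ to contain \emph{no} coset of a nontrivial proper finite subgroup, so the hypothesis of Theorem~\ref{tf} holds verbatim. Indeed, if $A\supseteq cH$ with $|H|\ge 2$, then $A$ is covered by at most $n-1$ cosets of $H$, contradicting (2); under (4), $p(G)=n$ forces $|H|\ge n$, whence $A=cH$ is a coset of a nontrivial subgroup, excluded; and no coset of a nontrivial finite subgroup is Sidon, so a Sidon $A$ cannot contain one, handling (5). For these three the aperiodic analysis is subsumed inside Theorem~\ref{tf} and needs no separate treatment.

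The interesting conditions are (1), (3), (6) and (7), where $A$ is allowed to contain cosets and the structure of $B$ must do the work. In the periodic case $|H|\ge 2$ the coset $cH\subseteq A$ together with any $b\in B$ yields $|cH\cap A|+|bH\cap B|\ge|H|+1$, which directly contradicts (7). For (3), a covering count shows that only a period-$2$ obstruction survives the weaker hypothesis $l\ge n-2$, and I would eliminate it using the two auxiliary hypotheses ($|\{a^n:a\in A\}|>2$ and two elements of $B$ of order exceeding $n$). Condition (1) is the Chowla analogue: here I would follow the Hamidoune line, using $B\cap H=\emptyset$ for every proper finite subgroup together with the Kneser equality placement of an $H$-coset inside $D\subseteq B$ to force an element of $B$ into $H$ itself, the desired contradiction. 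The remaining, and genuinely hard, case for all of (1),(3),(6),(7) is the aperiodic one $|H|=1$, where Kneser degenerates to $SD=A$ with $|SD|=|S|+|D|-1$, i.e. $(S,D)$ is a critical pair with trivial stabiliser. I would invoke Kemperman's structure theorem: in the principal case $S$ and $D$ are geometric progressions with a common ratio and $A=SD$ is a progression of length $n$, so that the sub-progression $S\subseteq A$ has offset at most $n-|S|=|D|-1$ and hence forces $e\in D\subseteq B$, contradicting $e\notin B$ (the degenerate cases $|S|=1$ or $|D|=1$ give $e\in B$ immediately). This is exactly where (6) is consumed, since it forbids $A$ from being a progression and its order bound on $B$ removes the residual quasi-periodic configurations.

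I expect the main obstacle to be precisely this aperiodic analysis. Kemperman's theorem in a general abelian group produces several recursively built ``quasi-periodic'' families of critical pairs, and the real labour is to verify that every one of them either collapses to a progression (handled above via $e\in B$) or reintroduces a nontrivial period (handled in the periodic case), so that the stated hypotheses genuinely exhaust all surviving configurations; the Chowla-type argument for (1) is the second delicate point. Once this dichotomy is closed, assembling the seven implications is routine bookkeeping.
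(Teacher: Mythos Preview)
Your reductions of (2), (4) and (5) to Theorem~\ref{tf} are correct and pleasantly shorter than what the paper does for those parts (the paper re-proves them via Theorem~\ref{t**}, Theorem~\ref{ta}, and Corollary~\ref{cg} respectively). So for those three items your plan is fine and arguably cleaner.

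For (1), (3), (6), (7), however, the Kneser--Kemperman dichotomy you propose does not close, and the paper takes a different and much lighter route. Two concrete problems with your sketch. First, in the periodic case for (1) you assert that Kneser places an $H$-coset \emph{inside $D$}; it does not. Kneser only makes $SD$ a union of $H$-cosets, and the set $D=\{b\in B:Sb\subseteq A\}$ need not contain any $H$-coset (membership in $D$ requires membership in $B$, which is not $H$-stable), so nothing forces $B\cap H\neq\emptyset$. Likewise you give no periodic-case argument for (6) at all: $A\supseteq cH$ does not make $A$ a progression, and the order hypothesis on $B$ is idle there. Second, the aperiodic case you yourself flag as the ``main obstacle'' is genuinely open in your plan: a critical pair with trivial stabiliser can still fall into Kemperman's quasi-periodic type (III), and those configurations do \emph{not} automatically reintroduce a nontrivial period of $SD$, so your periodic/aperiodic split does not line up with Kemperman's progression/quasi-periodic split. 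Running the full recursive KST here for four separate hypotheses would be substantial work you have not begun.

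The paper avoids all of this by one small but decisive move: instead of analysing the pair $(S,D)$, it adjoins the identity and works with $W_S=D\cup\{e\}$, then proves the single inequality $|SW_S|\ge|S|+|W_S|-1$ for each hypothesis using a tailor-made sumset bound, never touching Kemperman. For (1), since $W_S\cap H=\{e\}$ for every proper finite $H$, the Chowla-type Theorem~\ref{t*} applies directly. For (6), Hamidoune's Theorem~\ref{tx} applied to $S$ and $V_S=D$ gives $|S\cup SD|\ge|S|+\min\{|D|,v(D)\}=|S|+|D|$, using the order bound on $B$ and ruling out $S\langle D\rangle\subseteq A$ via ``$A$ is not a progression''. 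For (7), one passes to $B_0=B\cup\{e\}$ so the strict inequality becomes $\le|H|+1$, and Proposition~\ref{p2.3} yields the bound for all $S\subseteq A$, $W_S\subseteq B_0$ at once. For (3), Theorem~\ref{t**} plus an elementary argument bounding $|S|+|W_S|$ suffices. In each case the structural analysis you anticipate is replaced by a one-line appeal to an existing product-set theorem; the trick you are missing is simply to put $e$ back into the multiplier set.
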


In the following theorem, we establish that if $A$ is not matched to $B$, then certain additive number-theoretical structures must exist within $A$.

\begin{theorem}\label{t2}
Let $A$ be a nonempty finite subset of an abelian group $G$. Assume that there exists a subset $B$ of $G$ with $|B|=|A|$ and  $e\notin B$ so that $A$ is not matched to $B$. Then one of the following conditions holds:
\begin{itemize}
\item[i)]
$A$ contains a   progression;
\item[ii)]
$A$ contains a quasi-periodic subset. 
\end{itemize}
\end{theorem}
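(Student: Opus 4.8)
The plan is to argue by contraposition, running the matching condition through Hall's Marriage Theorem and then feeding the resulting small-sumset inequality into Kemperman's structure theorem. We may assume $n=|A|\ge 2$, since for $n=1$ the hypothesis $e\notin B$ already guarantees a match. First I would model matchings combinatorially: form the bipartite graph $\Gamma$ on vertex classes $A$ and $B$ in which $a\in A$ is joined to $b\in B$ precisely when $ab\notin A$. A matching from $A$ to $B$ is exactly a perfect matching of $\Gamma$ saturating $A$, so the assumed non-existence of such a matching means, by Hall's theorem, that there is a nonempty $S\subseteq A$ with $|N_\Gamma(S)|<|S|$.

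Next I would convert this deficiency into an additive inequality. Writing $C:=B\setminus N_\Gamma(S)=\{b\in B:Sb\subseteq A\}$, the failure of Hall's condition reads $|C|=|B|-|N_\Gamma(S)|>|B|-|S|=n-|S|$. Since $Sc\subseteq A$ for every $c\in C$, we get $SC\subseteq A$, hence $|SC|\le n$; combined with $|S|+|C|>n$ this yields the critical-pair inequality $|SC|\le |S|+|C|-1$. I would then dispose of the degenerate boundary cases. The hypothesis $e\notin B$ forces $|S|\ge 2$: if $S=\{a\}$, then $|N_\Gamma(S)|<1$ gives $aB\subseteq A$, so $aB=A$ by cardinality, and then $a\in A=aB$ would put $e\in B$, a contradiction. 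Symmetrically, if $|C|=1$ then $|S|>n-1$, so $S=A$ and $Ac=A$ for the unique $c\in C$; as $c\ne e$, the finite nonempty set $A$ is invariant under the nontrivial finite subgroup $\langle c\rangle$ and is therefore a union of its cosets, i.e.\ quasi-periodic, so case (ii) holds.

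Finally, in the principal case $|S|,|C|\ge 2$ with $|SC|\le |S|+|C|-1$, I would invoke Kemperman's structure theorem for the pair $(S,C)$. Its conclusion splits into the arithmetic-progression case, in which $S$ and $C$ are progressions with a common difference, so that $A\supseteq S$ contains a progression and (i) holds; and the quasi-periodic case, in which $S$ is quasi-periodic, so that $A$ contains a quasi-periodic subset and (ii) holds. The point demanding care is the exceptional saturated configuration in which $SC$ is cofinite in a finite $G$: there I would observe that $SC\subseteq A$ forces $A$ itself to be cofinite, whence $A$ is directly recognized as a progression (when $G$ is cyclic of prime order) or quasi-periodic (in general, by peeling off a full coset of a subgroup of order $2$), so the dichotomy persists. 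Since $S\subseteq A$, in every case the structural conclusion transfers from $S$ to $A$.

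The main obstacle I anticipate is this last step. Kemperman's theorem is a recursive statement whose base cases include singletons and the cofinite configuration, and the bookkeeping needed to guarantee that the structure descends to $S$, rather than only to $C$ or to $SC$, and thence to $A$, is where the argument is most delicate. Concretely, the crux is verifying that the singleton case is genuinely excluded (via $e\notin B$) and that the cofinite case is harmless (via $SC\subseteq A$); once these two boundary situations are controlled, Kneser's theorem supplies the nontrivial stabilizer whenever $|SC|\le|S|+|C|-2$, and the remaining equality case $|SC|=|S|+|C|-1$ is exactly the regime that Kemperman's refinement resolves into the progression/quasi-periodic alternative.
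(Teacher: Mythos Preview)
Your strategy is the paper's: Hall's criterion yields $S\subseteq A$ and $C=\{b\in B:Sb\subseteq A\}\subseteq B$ with $SC\subseteq A$ and $|S|+|C|>n$, then Kemperman supplies the dichotomy. The paper's one simplification dissolves precisely the obstacle you flag at the end: it adjoins the identity to form $W=C\cup\{e\}$, so that $SW=S\cup SC\subseteq A$ with the \emph{strict} inequality $|SW|<|S|+|W|-1$ (whence the boundary cases $|S|=1$, $|W|=1$ are dismissed by the one-line counts $|SW|=|W|$ and $|SW|=|S|$), and it then applies Kemperman in the form of Theorem~\ref{ty} to the \emph{product set} $SW$ rather than to $S$. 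That version says directly that $SW$ is a progression or quasi-periodic, and since $SW\subseteq A$ the proof ends there --- no recursion into the full KST, no separate cofinite analysis, and no need to transfer structure from $C$ or $SC$ back down to $S$.
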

We now turn our attention to linear matchings. 
Given a field extension $K\subset L$, an analogous notion of matchings between two $K$-subspaces of $L$ is introduced and developed in \cite{15}.
\begin{definition}
Let $A$ and $B$  be two $n$-dimensional $K$-subspaces of $L$. An ordered basis $\mathcal{A}=\{a_1,\ldots,a_n\}$ of $A$ is said to be \textit{matched} to ordered basis $\mathcal{B}=\{b_1,\ldots,b_n\}$ of $B$ if 
\begin{align}\label{eqn100}
a_i^{-1} A\cap B\subset\langle b_1,\ldots,b_{i-1},b_{i+1},\ldots,b_n\rangle,
\end{align}
for each $1\leq i\leq n$, where $\langle b_1,\ldots,b_{i-1},b_{i+1},\ldots,b_n\rangle$ stands for the subspace of $B$ spanned $\{b_1,\ldots,b_{i-1},b_{i+1},\ldots,b_n\}$. We say $A$ is {\it{matched}} to $B$ (or $A$ is \textit{matchable} to $B$) if every ordered basis $\mathcal{A}$ of $A$ can be matched to an ordered basis $\mathcal{B}$ of $B$.
\end{definition}
Note that if \eqref{eqn100} is the case, then no $a_ib_i$ lie in $A$ and thus $a_i\mapsto b_i$ defined a matching $\mathcal{A}\to\mathcal{B}$ in the multiplicative group $L^*$ in the group setting. It is established in \cite{15} that $\mathcal{A}=\{a_1,\ldots,a_n\}$ is matched to a basis of $B$ if and only if the following dimension criteria hold for every $J\subset\{1,\ldots,n\}$:
\begin{align*}
\dim_K\bigcap_{i\in J}\left(a_i^{-1}A\cap B\right)\leq n-|J|.
\end{align*}
In particular, setting $J=\{1,\ldots,n\}$, the subspace $\overset{k}{\underset{i=1}{\bigcap}}\left(a_i^{-1}A\cap B\right)$ must be trivial which cannot occur  if $1\in B$.  Having said this, the linear analogue of the matching property is defined as follows:
\begin{definition}
A field extension $K\subset L$ is said to  have the \textit{linear  matching property} if for every finite-dimensional $K$-subspaces $A$ and $B$ of $L$ with $\dim_KA=\dim_KB$ and $ 1\notin B$, $A$ is matched to $B$.
\end{definition}
Similar to the group setting it is proved in \cite{15} that $K\subset L$ possesses the linear matching property if and only if there are no nontrivial finite intermediate extension $K\subset M\subset L$. 
So the existence of nontrivial finite intermediate extensions is an obstruction for the linear matching property. This observation is formally stated and proven in  \cite{2} as follows:
\begin{theorem}\label{t1.6}
Let $K\subset L$ be a field extension and $A$ be an $n$-dimensional subspace of $L$ which does not contain any linear translate of a nontrivial finite dimensional subfield of $L$. Then $A$ is matched to any $n$-dimensional $K$-subspace of $L$ provided $1\notin B$. 
\end{theorem}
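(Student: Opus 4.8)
The plan is to transport the proof of Theorem \ref{tf} to the linear setting, replacing Kneser's addition theorem and Hall's Marriage Theorem by their linear analogues. The role of Hall's theorem is played by the dimension criterion of \cite{15} recalled above: an ordered basis $\{a_1,\dots,a_n\}$ of $A$ matches a basis of $B$ if and only if $\dim_K\bigcap_{i\in J}(a_i^{-1}A\cap B)\le n-|J|$ for every $J\subseteq\{1,\dots,n\}$. Thus, to prove that $A$ is matched to $B$, I would fix an arbitrary ordered basis $\{a_1,\dots,a_n\}$ of $A$ and assume, towards a contradiction, that this criterion fails for some $J$. Writing $s=|J|$, $S=\langle a_i:i\in J\rangle$ and $W=\bigcap_{i\in J}(a_i^{-1}A\cap B)$, the failure means $\dim_K S=s$ and $\dim_K W\ge n-s+1$. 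Since $a_iW\subseteq A$ for every $i\in J$ and $A$ is a subspace, the product space satisfies $SW=\sum_{i\in J}a_iW\subseteq A$, while $W\subseteq B$ forces $1\notin W$. In this way I produce subspaces $S,W$ with $SW\subseteq A$ and $\dim_K S+\dim_K W\ge n+1$, the exact linear counterpart of the inclusion $ST\subseteq A$ with $|S|+|T|\ge n+1$ that drives the group proof.

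The next step is to apply the linear analogue of Kneser's addition theorem to $S$ and $W$. Let $H=\{x\in L:xSW\subseteq SW\}$ be the stabilizer of $SW$; this is an intermediate field $K\subseteq H\subseteq L$ and $SW$ is a vector space over $H$. The linear Kneser bound gives $\dim_K SW\ge \dim_K S+\dim_K W-\dim_K H\ge n+1-\dim_K H$, and since $SW\subseteq A$ forces $\dim_K SW\le n$, we obtain $\dim_K H\ge 2$ outside the equality situation discussed below. Because $SW\subseteq A$ is finite-dimensional and is an $H$-space, $\dim_K H\le\dim_K SW\le n$, so $H$ is a nontrivial finite-dimensional subfield; it is proper since $H=L$ would give $L=Ly\subseteq SW\subseteq A$ for any nonzero $y\in SW$, whence $A=L$ and $B=L\ni 1$, contrary to $1\notin B$. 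Finally, as $SW$ is an $H$-space, any nonzero $y\in SW$ gives $yH\subseteq SW\subseteq A$, so $A$ contains the linear translate $yH$ of the nontrivial finite-dimensional subfield $H$, contradicting the hypothesis on $A$.

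The main obstacle is the equality case $\dim_K H=1$, i.e. $H=K$, where the Kneser bound only yields $\dim_K SW\ge n$ and hence $SW=A$ with equality $\dim_K SW=\dim_K S+\dim_K W-1$; this is precisely the trivial-stabilizer case of the proof of Theorem \ref{tf}, and here no subfield is produced directly. I would resolve it by invoking a linear analogue of Vosper's (Kemperman-type) structure theorem for equality in linear Kneser: with trivial stabilizer, $S$ and $W$ must be geometric progressions sharing a common ratio $q$, so that $A=SW=y\langle 1,q,\dots,q^{n-1}\rangle$ for some $y\in L^{*}$. If $q$ is algebraic over $K$ of degree $d$ with $2\le d\le n$, then $\langle 1,q,\dots,q^{n-1}\rangle\supseteq K(q)$, so $A\supseteq yK(q)$ is again a translate of a nontrivial finite-dimensional subfield, a contradiction. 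The genuinely delicate residual case is when $q$ is transcendental over $K$ (or algebraic of degree greater than $n$): then $A$ is a \emph{long geometric progression} containing no subfield translate, so the hypothesis yields no contradiction, and one must instead rule out the violating configuration directly. I expect this to be done, as in the torsion-free group case of \cite{16}, by ordering the monomials $1,q,\dots,q^{n-1}$ by degree and matching greedily from the top; it is exactly this case, together with the degenerate boundaries $\dim_K S=1$ and $q\in K$, where the bulk of the care is required.
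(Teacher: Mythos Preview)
Your outline has the right skeleton (dimension criterion plus linear Kneser), but there is a genuine gap: you apply Kneser to $S$ and $W=\bigcap_{i\in J}(a_i^{-1}A\cap B)$ directly, which only yields $\dim_K S+\dim_K W\ge n+1$. With this, the case $H=K$ is perfectly consistent with $\dim_K SW=\dim_K S+\dim_K W-1$ and $SW=A$, and nothing forces a nontrivial stabilizer. You then try to salvage this case with a ``linear Vosper/Kemperman'' structure theorem, but no such result is available in the required generality, and your handling of the transcendental-ratio subcase by a vague greedy argument is not a proof. This is the missing idea.

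The fix, and the device actually used in the paper (see the proof of Theorem~\ref{t1.8} part~(2), which is the same argument specialized to $n=p(K,L)$, and the description following Theorem~\ref{t1.6}), is the linear analogue of adjoining the identity: replace $W$ by $W_S:=K\oplus V_S$, where $V_S=\bigcap_{i\in J}(a_i^{-1}A\cap B)$. Since $1\notin B\supseteq V_S$, one has $\dim_K W_S=\dim_K V_S+1\ge n-|J|+2$, and since $S\subseteq A$ and $SV_S\subseteq A$ one still has $\langle SW_S\rangle\subseteq A$. Hence
\[
\dim_K\langle SW_S\rangle\le n<\dim_K S+\dim_K W_S-1,
\]
a \emph{strict} inequality. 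Now Theorem~\ref{tt} (which does not need separability, unlike Theorem~\ref{tt'} that your version of Kneser implicitly invokes) produces an intermediate field $K\subsetneq M$ with $M\langle SW_S\rangle=\langle SW_S\rangle$; in particular $M$ is finite over $K$ since $\langle SW_S\rangle\subseteq A$ is finite-dimensional, and for any nonzero $x\in\langle SW_S\rangle$ one gets $xM\subseteq A$, contradicting the hypothesis on $A$. With this single adjustment the equality case and your entire Vosper detour disappear.
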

\begin{remark}
Notice that if $K\subset L$ is a field extension, $M$ is an intermediate subfield, and $l\in L^*$, a linear translate of $M$ is a $K$-subspace of the form $lM$.
\end{remark}

The proof of Theorem \ref{t1.6} is essentially based on the adaptation of methods employed in \cite{15} from the theory of matroids, specifically Rado's Theorem \cite{27}, and a linear version of Kneser's theorem (Theorem \ref{tK}). 

In continuation of Theorem 1.4 in \cite{5}, where in the linear settings, a sufficient condition for unmatchable subspaces is addressed, we come up with sufficient conditions for the existence of matchable subspaces, similar to Theorem \ref{t1} in the group setting. Likewise, certain linear structures of subspaces, such as Sidon subspaces, are leveraged in our results. 
Note that all necessary definitions are presented in Section 2. In Theorem \ref{t1.8}, we assume that $p(K,L)$ denotes the smallest degree of an intermediate field extension $K\subsetneq M\subset L$. Thus $p(K,L)\geq2$, and $p(K,L)=\infty$ if the extension is purely transcendental. All other necessary definitions concerning Theorem \ref{t1.8} and \ref{t1.9} are provided in Section 2. It is worth mentioning that Theorem \ref{t1.8} parts (1), (2), (3) may be seen as a linear analogue of Theorem \ref{t1} parts (1), (4) and (5), respectively.

\begin{theorem}\label{t1.8}
Let $K\subset L$ be a field extension. Let $A$ and $B$ be two $n$-dimensional $K$-subspaces in  $L$ with $1\notin B$ and $n>1$. Assume further that one of the following conditions holds:
\begin{enumerate}
\item
$B\cap M=\{0\}$, for every finite dimensional intermediate field $K\subset M\subsetneq L$, and either  $n\leq 5$ or every algebraic element of $L$ is separable over $K$. 
\item
$n=p(K,L)$ and $A$ is not a linear translate of any nontrivial intermediate field $K\subset M\subsetneq L$. 
\item
$A$ is a Sidon subspace. 
\end{enumerate}
Then $A$ is matched to $B$.
\end{theorem}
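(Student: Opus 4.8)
The plan is to deduce everything from the dimension criterion of \cite{15} recalled above, treating parts (2) and (3) as consequences of Theorem \ref{t1.6} and reserving the additive-combinatorial machinery for part (1). For part (3) I would first observe that a Sidon subspace can contain no linear translate $xM$ of an intermediate field $M\supsetneq K$: picking $m\in M\setminus K$, the elements $x,xm,xm^2$ all lie in $xM$ and satisfy the product coincidence $(xm)(xm)=(x)(xm^2)$ with the two unordered factor-pairs distinct up to scalars, contradicting the defining property of a Sidon subspace. Hence $A$ contains no translate of a nontrivial finite-dimensional subfield and Theorem \ref{t1.6} finishes part (3). For part (2), if $A$ contains no such translate we are again done by Theorem \ref{t1.6}; otherwise $A\supseteq xM$ with $M\supsetneq K$ a finite-dimensional intermediate field, and $\dim_K M\ge p(K,L)=n=\dim_K A$ forces $xM=A$, making $A$ itself a linear translate of $M$ and contradicting the hypothesis of (2).

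Part (1) is the heart of the matter, and I would argue by contradiction. The hypothesis is equivalent to the statement that every nonzero element of $B$ is transcendental over $K$, since an algebraic element would lie in the finite-dimensional field it generates over $K$. Assume $A$ is not matched to $B$; then some ordered basis $\{a_1,\dots,a_n\}$ of $A$ violates the criterion, giving $J\subseteq\{1,\dots,n\}$ with $\dim_K C>n-|J|$, where $C:=\bigcap_{i\in J}\bigl(a_i^{-1}A\cap B\bigr)\subseteq B$. Writing $W:=\langle a_i:i\in J\rangle$ we have $\dim_K W=|J|$, and from $a_iC\subseteq A$ it follows that $\langle WC\rangle\subseteq A$, whence $\dim_K W+\dim_K C\ge n+1>\dim_K\langle WC\rangle$. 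The two extreme values of $|J|$ are elementary and I would dispatch them first: if $|J|=1$ the failure forces $B=a_i^{-1}A\ni 1$, against $1\notin B$; and if $J=\{1,\dots,n\}$ then $C=B\cap F$ with $F=\{x\in L:xA\subseteq A\}$ a finite-dimensional intermediate field, while $B\cap F=\{0\}$ by the reformulated hypothesis together with the fact that $1\notin B$ already rules out nonzero scalars in $B$.

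For the intermediate ranges $2\le|J|\le n-1$ I would invoke the linear analogue of Kneser's theorem. Setting $H:=\{x\in L:x\langle WC\rangle\subseteq\langle WC\rangle\}$, a finite-dimensional intermediate field, the inequality $\dim_K\langle WC\rangle\ge\dim_K W+\dim_K C-\dim_K H$ yields $\dim_K H\ge 1$. It is exactly this inequality that requires the proviso of (1): the linear Kneser theorem holds when every algebraic element of $L$ is separable over $K$, and unconditionally only in the low-dimensional range secured by $n\le 5$, while it can fail for inseparable extensions in higher dimension. Granting it, I would split according to $\dim_K H$. When $\dim_K H\ge 2$, the space $\langle WC\rangle$ is a nonzero $H$-subspace of $A$, and transporting the $H$-action back through the relations $a_iC\subseteq\langle WC\rangle$ (so that $HC\subseteq\bigcap_{i\in J}a_i^{-1}A$) I would extract, via Kemperman-type structural information on the critical pair $(W,C)$, a nonzero element of $B$ lying in a finite-dimensional intermediate field. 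When $\dim_K H=1$ the Kneser inequality is an equality, and the linear Vosper/Kemperman classification forces $W$ and $C$ to be geometric progressions of a common ratio; combining this rigidity with $\langle WC\rangle\subseteq A$ and $1\notin B$ should again produce an algebraic element of $B$. Either outcome contradicts the transcendence reformulation.

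The main obstacle is the final step of part (1): the combinatorial structure produced by Kneser's theorem lives a priori on the $A$-side, through $\langle WC\rangle\subseteq A$ and the action of its stabilizer $H$, whereas the hypothesis constrains only $B$. Bridging the two sides — turning the containment $C\subseteq B$ and the relations $a_iC\subseteq\langle WC\rangle$ into a bona fide nonzero algebraic element of $B$, rather than merely a translate of $H$ sitting inside $A$ — is the delicate point, and is where the precise (Kemperman-type) description of the critical pair $(W,C)$ and the transcendence of $B$ must be used together. A secondary, purely technical obstacle is that the underlying linear Kneser inequality genuinely fails for general inseparable extensions, which is the sole reason part (1) carries the separability-or-$n\le 5$ hypothesis that is absent from parts (2) and (3).
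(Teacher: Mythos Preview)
Your treatment of parts (2) and (3) is correct and takes a different, cleaner route than the paper. The paper re-derives the dimension inequality $\dim_K\langle SW_S\rangle\ge\dim_K S+\dim_K W_S-1$ for each part separately using Theorem \ref{tt}; you instead observe that in both cases $A$ contains no linear translate of a nontrivial finite-dimensional intermediate field and then cite Theorem \ref{t1.6} once. Your argument for (3) via the product coincidence $(xm)(xm)=(x)(xm^2)$ is valid (one could also note directly that $m(xM)=xM$ gives $\dim_K(A\cap mA)\ge\dim_K xM\ge2$, violating \eqref{eqqq1}), and your dimension-counting argument for (2) is airtight. This buys brevity at the cost of depending on the earlier black box \ref{t1.6}, whereas the paper's approach is self-contained.

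Part (1), however, has genuine problems. First, your reformulation is wrong: the hypothesis $B\cap M=\{0\}$ for all \emph{proper} finite-dimensional intermediate fields $M$ does not force nonzero elements of $B$ to be transcendental. If $b\in B\setminus\{0\}$ is algebraic with $K(b)=L$ (so $b$ is primitive and $[L:K]<\infty$), the hypothesis says nothing, since $K(b)$ is not a proper intermediate field. In particular, when $[L:K]$ is prime the hypothesis is vacuous (given $1\notin B$), yet every element of $B$ is algebraic. Your subsequent argument leans on this false transcendence reformulation at several points.

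More seriously, your plan for the core step is incomplete, as you yourself acknowledge. From the failed criterion you only obtain $\dim_K\langle WC\rangle\le\dim_K W+\dim_K C-1$, so Kneser merely gives $\dim_K H\ge1$, which is trivial. Neither the $H\supsetneq K$ case nor the $H=K$ case is actually closed: the ``Kemperman-type structural information'' and the ``linear Vosper/Kemperman classification'' you invoke are not available in the form you need, and in any event the structure they would produce lives on the $A$-side while the hypothesis constrains $B$. The paper bypasses all of this with a short trick you are missing: set $W_S=K\oplus V_S$, let $M$ be the stabilizer of $\langle SW_S\rangle$, and apply the linear Kneser theorem a \emph{second} time to $S$ and the enlarged space $W'_S=M+V_S$. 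One checks $\langle SW'_S\rangle=\langle SW_S\rangle$, so the stabilizer is again $M$, and inclusion--exclusion gives
\[
\dim_K\langle SW_S\rangle\ge\dim_K S+\dim_K(M+V_S)-\dim_K M=\dim_K S+\dim_K V_S-\dim_K(M\cap V_S).
\]
Now the hypothesis enters in one line: since $V_S\subset B$ and $M$ is a proper finite-dimensional intermediate field (proper because otherwise $A=L=B\ni1$), we have $M\cap V_S=\{0\}$, and the desired inequality follows. No Vosper-type classification is needed, and the separability/$n\le5$ assumption is used only to guarantee the linear Kneser inequality itself.
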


The following is one of our primary results, wherein unmatchable subspaces are characterized by utilizing the linear structure of subspace $B$. Note that it may be seen as a linear version of Theorem \ref{t2}.

\begin{theorem}\label{t1.9}
Let $K\subset L$ be a field extension. Let $A$ be an $n$-dimensional $K$-subspace of $L$. Assume that there exists an $n$-dimensional $K$-subspace $B$ with $1\notin B$ so that $A$ is not matched to $B$. Then $B\oplus K$ contains a subspace whose 1-atom is a nontrivial intermediate field $K\subsetneq M\subset L$. 
\end{theorem}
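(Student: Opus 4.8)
The plan is to convert the failure of matchability into a subspace with small multiplicative ``boundary'' and then read off its $1$-atom from the isoperimetric structure theory for subspaces. First I would record the obstruction coming from the dimension criterion of \cite{15}: since $A$ is not matched to $B$, some ordered basis $\mathcal{A}=\{a_1,\dots,a_n\}$ of $A$ fails to be matchable to any basis of $B$, so there is a set $J\subseteq\{1,\dots,n\}$ with
\[
\dim_K\bigcap_{i\in J}\bigl(a_i^{-1}A\cap B\bigr)\ \ge\ n-|J|+1 .
\]
Put $W=\bigcap_{i\in J}(a_i^{-1}A\cap B)\subseteq B$ and $V=\langle a_i : i\in J\rangle\subseteq A$, so that $\dim_K W\ge n-|J|+1$, $\dim_K V=|J|\ge 1$, and $a_iW\subseteq A$ for every $i\in J$.

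The decisive step is to augment $W$ by the ground field, and this is precisely what forces the conclusion to involve $B\oplus K$ rather than $B$. Since $1\notin B$ we have $W\cap K=\{0\}$, so $W':=W\oplus K$ is a subspace of $B\oplus K$ with $1\in W'$ and $\dim_K W'\ge n-|J|+2$. Because each $a_i$ lies in $A$, we get $a_iW'=a_iW+Ka_i\subseteq A$, whence $VW'\subseteq A$ and in particular $\dim_K(VW')\le n$. Thus $V$ is a witness that the $1$-connectivity of $W'$ (the minimal value of $\dim_K(XW')-\dim_K X$ over nonzero subspaces $X$, in the sense of Section 2) is small:
\[
\kappa_1(W')\ \le\ \dim_K(VW')-\dim_K V\ \le\ n-|J|\ \le\ \dim_K W'-2 .
\]

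Next I would feed $W'$ into the isoperimetric machinery. Because this boundary is strictly smaller than $\dim_K W'-1$, we are in the regime where the linear analogue of Hamidoune's atom theorem applies and the $1$-atom $M$ of $W'$ is multiplicatively closed. A multiplicatively closed finite-dimensional $K$-subspace of $L$ containing $1$ is an integral domain finite over $K$, hence a field, so $M$ is an intermediate field $K\subseteq M\subset L$. I would stress that this route needs no separability hypothesis, in contrast to Theorem \ref{t1.8}(1): we never invoke the quantitative linear Kneser inequality, only that atoms are stabilizers and stabilizers are fields. Nontriviality of $M$ then comes from the same dimension bookkeeping: since $M$ is a field, $W'\subseteq MW'$, so $\kappa_1(W')=\dim_K(MW')-\dim_K M\ge \dim_K W'-\dim_K M$; combining with $\kappa_1(W')\le n-|J|$ and $\dim_K W'\ge n-|J|+2$ yields $\dim_K M\ge 2$, i.e.\ $K\subsetneq M$. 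Hence $W'\subseteq B\oplus K$ is a subspace whose $1$-atom is a nontrivial intermediate field, as required.

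I expect the main obstacle to be the structural step rather than the bookkeeping: one must establish (or correctly invoke from Section 2) the linear atom theorem over an arbitrary extension $K\subset L$, in particular that the regime $\kappa_1(W')\le \dim_K W'-2$ genuinely forces the $1$-atom to be a stabilizer, and one must verify the finiteness conditions needed to define $\kappa_1$ and the $1$-atom when $L/K$ is infinite. The latter can be arranged by passing to a finitely generated subextension of $L/K$ containing $A$, $B$, and the relevant products, after which every subspace in sight is finite-dimensional and the isoperimetric quantities are well defined.
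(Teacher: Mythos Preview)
Your proposal is correct and follows essentially the same route as the paper: both arguments extract from the failure of the dimension criterion a subspace $W'=K\oplus W\subseteq B\oplus K$ (the paper's $W_J$) together with $V=\langle a_i:i\in J\rangle$ satisfying $\dim_K\langle VW'\rangle<\dim_K V+\dim_K W'-1$, and then feed this into the isoperimetric/atom theory of \cite{7} to conclude that the $1$-atom of $W'$ is a proper intermediate field. The only difference is packaging: the paper invokes Theorem~\ref{tw} as a black box (which already delivers both that the $1$-atom is a field and that it strictly contains $K$), whereas you unwind that statement into the connectivity bound $\kappa_1(W')\le\dim_K W'-2$ and recover nontriviality of $M$ by the separate bookkeeping $\dim_K M\ge\dim_K W'-\kappa_1(W')\ge 2$.
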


\textbf{Organization of the paper:} Section 2 encompasses all the necessary concepts and results drawn from additive number theory, matching theory, and their linear counterparts. These notions and results serve as the foundation for the proofs presented in our work. Section 3 is dedicated to presenting the proofs of our main results, specifically those related to matchings within the context of abelian groups. Section 4 is focused on providing the proofs related to linear matchings. Finally, in Section 5, we present two conjectures concerning linear matchings. We first explore the possibility of a linear counterpart to a result we have previously proven in the context of matchings within groups (Theorem \ref{t1} part (7)). Next, we introduce the concept of Chowla subspaces as a linear analogue of Chowla subsets, and we conjecture whether a similar result to matchings in Chowla subsets can be extended to matchings in Chowla subspaces.

\section{Preliminaries}
\subsection{Abelian additive theory}
We commence this section by providing definitions and presenting a few results from additive number theory, along with cardinal criteria for matchable sets. Given an  abelian group $G$ and nonempty subsets $A$, $B$ of $G$, we define their product as
\[AB=\{ab\mid a\in A,\ b\in B\}.\]
The subset $A\subset G$ is called a \textit{Sidon subset} if all pairwise products of its elements are distinct. That is, the equation $ab=cd$ has only the trivial solution $\{a,b\}=\{c,d\}$ in $A$. A nonempty subset $A$ of $G$, not containing 1, is called a {\it{Chowla subset}} if
the order of every element of $A$ is $\geq$ $|A|+1$. A \textit{progression} of length $k$, intitial term $a$ with common ratio $d$ is a subset of $G$ of the form $\{a,ad,ad^2,\ldots,ad^{k-1}\}$.  Let $H$ be a nontrivial subgroup of $G$. The subset $A\subset G$ is called \textit{$H$-periodic} if it can be expressed as the union of some $H$-cosets. If $A$ is $H$-periodic, every element $a\in AH\setminus {A}$ is called an {\it{$H$-hole}} and thus the number of $H$-holes in $A$ in view of Proposition 4.1(v) in \cite{17} is 

$$
|AH|-|A|=|AH\setminus {A}|.
$$

A \textit{quasi-periodic decomposition} of $A$ with \textit{quasi-period} $H$ is a partition $A=A_1\cup A_0$ of $A$ into two disjoint  (two possibly empty) subsets such that $A_1$ is $H$-periodic or an $H$-coset. A set $A\subset G$ is \textit{quasi-periodic} if $A$ has a quasi-periodic decomposition $A=A_1\cup A_0$ with $A_1$ nonempty.

The objective of this subsection is to introduce and elucidate one of the most fundamental theorems in Inverse Additive Theory: Kneser's Theorem, along with its numerous variants, extensions, and consequences. There exist several equivalent ways to present Kneser's Theorem. We begin with the following form of this theorem from \cite[p.115]{25}. See \cite{17} for further discussion about Kneser's Theorem.

\begin{theorem}[Kneser's Theorem]\label{tK}
Let $G$ be an abelian group and $A$ and $B$ be nonempty subsets of $G$.  Then
\[|AB|\geq|A|+|B|-|H|,\]
where $H$ is the stabilizer of $AB$, i.e. 
\[H=\{x\in G\mid xAB\subset AB\}.\]
\end{theorem}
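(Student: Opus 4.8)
\emph{The plan} is to prove the inequality by induction on $|B|$, exploiting the translation invariance of every quantity involved together with the \emph{Dyson (or $e$-) transform} as the engine that drives the induction downward. First I would note that replacing $A$ by $Aa^{-1}$ and $B$ by $Bb^{-1}$ (for any $a\in A$, $b\in B$) changes $AB$ only by the translate $(ab)^{-1}$; this leaves $|A|$, $|B|$, $|AB|$ and the stabilizer $H$ unchanged, so I may assume $e\in A\cap B$, where $e$ is the neutral element. The base case $|B|=1$ is immediate: then $AB$ is a translate of $A$, so $|AB|=|A|=|A|+|B|-1\geq|A|+|B|-|H|$ since $|H|\geq1$. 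For the inductive step I assume $|B|\geq2$ and that the inequality holds for all pairs whose second set has smaller cardinality.

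For $g\in G$ set
\[
A_g=A\cup gB,\qquad B_g=B\cap g^{-1}A .
\]
A short computation shows $A_gB_g\subseteq AB$ (each of $A\,B_g$ and $gB\,B_g$ lands in $AB$) and, via $g(B\cap g^{-1}A)=gB\cap A$, that $|A_g|+|B_g|=|A|+|B|$, while $A\subseteq A_g$ and $B_g\subseteq B$. Choosing $g\in AB^{-1}$ guarantees $B_g\neq\emptyset$, and $B_g\subsetneq B$ precisely when $gB\not\subseteq A$. This yields a clean dichotomy. \textbf{Bad case:} $gB\subseteq A$ for \emph{every} $g\in AB^{-1}$. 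Since $e\in B$, this forces $A=AB^{-1}B$, so every element of $B^{-1}B$ stabilizes $A$; hence $B^{-1}B\subseteq K$, where $K$ is the stabilizer of $A$. Then $B$ lies in a single $K$-coset and $AB$ collapses to one translate of $A$, giving $H=K$, $|B|\leq|K|$ and $|AB|=|A|=|A|+|B|-|B|\geq|A|+|B|-|H|$, as required.

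\textbf{Good case:} there is $g\in AB^{-1}$ with $gB\not\subseteq A$, so $\emptyset\neq B_g\subsetneq B$. Applying the induction hypothesis to $(A_g,B_g)$ gives
\[
|AB|\geq|A_gB_g|\geq|A_g|+|B_g|-|H_g|=|A|+|B|-|H_g|,
\]
where $H_g$ is the stabilizer of $A_gB_g$. If $|H_g|\leq|H|$ we are done. \emph{The main obstacle lies exactly here:} because $A_gB_g$ is only a subset of $AB$, its stabilizer $H_g$ need be neither contained in nor smaller than $H=H(AB)$, and controlling this possible growth of the stabilizer is the genuine technical core of Kneser's theorem.

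To surmount it I would pass to a minimal counterexample: among all failures choose one with $|B|$ least. The bad case cannot occur, so some good transform applies, and the displayed inequality forces $|H_g|>|H|$; that is, every admissible transform \emph{strictly enlarges} the stabilizer. I would then exploit the $H_g$-periodicity of $A_gB_g\subseteq AB$ by passing to the quotient $G/H_g$ (equivalently, by comparing with the $H_g$-periodic closures $AH_g$ and $BH_g$), where the relevant instance is strictly smaller, and re-running the argument there to contradict minimality. Organizing the induction around the sharper bound $|AB|\geq|AH|+|BH|-|H|$ should make this descent transparent, since $A\subseteq A_g$ and $B_g\subseteq B$ interact well with the coset structure of the enlarged stabilizer. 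It is precisely this bookkeeping of the stabilizer across the transform that I expect to be the most delicate part of the write-up.
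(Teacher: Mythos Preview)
The paper does not actually prove Kneser's theorem; it merely states it and cites Nathanson's textbook \cite{25} (and then records an equivalent reformulation, also with a reference rather than a proof). So there is no in-paper argument to compare against.

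That said, your outline is precisely the standard Dyson-transform proof one finds in Nathanson (and in most modern treatments): normalize so that $e\in A\cap B$, induct on $|B|$, push the pair $(A,B)$ to $(A_g,B_g)=(A\cup gB,\,B\cap g^{-1}A)$, and split into the ``bad'' periodic case and the ``good'' case where $|B_g|<|B|$. Your verification that $A_gB_g\subseteq AB$ and $|A_g|+|B_g|=|A|+|B|$ is correct, and your handling of the bad case (concluding $AB=A$, hence $H$ equals the stabilizer of $A$ and $|B|\le|H|$) is clean.

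You have also correctly located the genuine difficulty: in the good case the stabilizer $H_g$ of $A_gB_g$ need not be contained in $H$, so the induction does not close immediately. Your proposed fix---take a minimal counterexample, observe every admissible transform strictly enlarges the stabilizer, and then descend via the quotient $G/H_g$ (equivalently, induct on the stronger inequality $|AB|\ge|AH|+|BH|-|H|$)---is exactly the right idea and is how Nathanson organizes it. What remains is only to carry out that bookkeeping carefully: once $H_g$ is nontrivial one replaces $A,B$ by their images in $G/H_g$, notes that $|A/H_g|+|B/H_g|<|A|+|B|$ (since $H_g$ properly acts), applies the induction hypothesis there, and pulls the inequality back. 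This is routine once set up, but as you anticipate, it is where the write-up earns its keep. Your sketch is not yet a complete proof, but there is no wrong turn in it.
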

Theorem \ref{tK} has the following equivalent formulation whose proof shall be found in \cite{25}.
\begin{theorem}
Let $G, A$ and $B$ be as in Theorem \ref{tK}. If $|AB|<|A|+|B|-1$, then 
$$
|AB|\geq|AH|+|BH|-|H|,
$$
where $HAB=AB$.
\end{theorem}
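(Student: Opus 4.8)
The plan is to deduce this equivalent formulation directly from Theorem \ref{tK}, the point being that the refined bound is nothing more than the original Kneser inequality applied to the $H$-saturated sets $AH$ and $BH$ in place of $A$ and $B$. Throughout, let $H=\{x\in G\mid xAB\subset AB\}$ be the stabilizer of $AB$, so that $H$ is a subgroup of $G$ with $HAB=AB$; since $G$ is abelian this is the same as $ABH=AB$, i.e.\ $AB$ is $H$-periodic, which is exactly the hypothesis $HAB=AB$ recorded in the statement.

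First I would establish the set identity $(AH)(BH)=AB$. Using commutativity together with $HH=H$ (as $H$ is a subgroup), one computes $(AH)(BH)=ABHH=ABH=AB$, where the last equality is the $H$-periodicity of $AB$. Thus the product of the two sets $AH$ and $BH$ is exactly the set $AB$, and in particular the stabilizer of $(AH)(BH)$ is literally the stabilizer of $AB$, namely $H$ itself.

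Next I would apply Theorem \ref{tK} not to the pair $(A,B)$ but to the pair $(AH,BH)$. These are nonempty subsets of $G$ whose product is $AB$, whose stabilizer is $H$; hence Theorem \ref{tK} yields $|(AH)(BH)|\geq|AH|+|BH|-|H|$, which is precisely $|AB|\geq|AH|+|BH|-|H|$, the asserted bound. Observe that, since $|AH|\geq|A|$ and $|BH|\geq|B|$, this conclusion is formally at least as strong as the first form, which is what one expects of a sharpened version.

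Finally I would comment on the role of the hypothesis $|AB|<|A|+|B|-1$. The displayed inequality does not itself require it; its purpose is to force the stabilizer $H$ to be nontrivial, which is what makes the refined form non-vacuous. Indeed, were $H$ trivial, then $AH=A$ and $BH=B$ and Theorem \ref{tK} would give $|AB|\geq|A|+|B|-1$, contradicting the hypothesis; so under $|AB|<|A|+|B|-1$ one automatically has $|H|\geq 2$. I do not anticipate a serious obstacle here: the only points needing care are the identity $(AH)(BH)=AB$ and the observation that this product has the same stabilizer $H$, both of which are immediate from commutativity and periodicity, so that the entire substance of the argument is carried by Theorem \ref{tK}.
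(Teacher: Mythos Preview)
Your argument is correct. The paper itself does not supply a proof of this statement; it simply cites Nathanson \cite{25} for the derivation, so there is no in-paper argument to compare against. What you have written is precisely the standard deduction of the second formulation from the first: the identity $(AH)(BH)=ABH=AB$ together with a second application of Theorem~\ref{tK} to the pair $(AH,BH)$, whose product $AB$ has the same stabilizer $H$. Your closing remark that the hypothesis $|AB|<|A|+|B|-1$ serves only to force $H\neq\{e\}$ is also accurate; the inequality $|AB|\geq|AH|+|BH|-|H|$ holds unconditionally, and in fact one has equality there since $AB$, $AH$, $BH$ are all unions of $H$-cosets.
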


The following result will be used in the proof of Theorem \ref{t1} part (7). It is a consequence of  Kneser's Theorem together with the {\it{Pigeonhole bounds}} for product sets.
\begin{proposition}\label{p2.3}
Let  $G$ be an abelian group and $A$ and $B$  be nonempty finite subsets of $G$. Assume that for every nontrivial finite subgroup $H$ of $G$ and all elements $a,b\in G$, we have 
\[|aH\cap A|+|bH\cap B|\leq|H|+1.\]
Then for all nonempty subsets $S\subset A$ and $T\subset B$, we have $|ST|\geq|S|+|T|-1$.
\end{proposition}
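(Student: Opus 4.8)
The plan is to argue by contradiction. Suppose there are nonempty $S\subset A$ and $T\subset B$ with $|ST|<|S|+|T|-1$; I will extract from Kneser's Theorem a nontrivial finite subgroup against which the pigeonhole hypothesis can be deployed. First I would apply the structural reformulation of Kneser's Theorem stated immediately after Theorem~\ref{tK}: since $|ST|<|S|+|T|-1$, there is a subgroup $H$ with $HST=ST$ and
\[
|ST|\ge |SH|+|TH|-|H|.
\]
Two features of $H$ must be pinned down before proceeding. Using $|SH|\ge|S|$ and $|TH|\ge|T|$, the displayed inequality gives $|S|+|T|-1>|ST|\ge|S|+|T|-|H|$, so $|H|>1$ and $H$ is nontrivial; and since $h\mapsto hx$ embeds $H$ into the finite set $ST$ for a fixed $x\in ST$, the subgroup $H$ is finite. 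Hence $H$ is exactly a subgroup to which the hypothesis applies.

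Next I would recast the bound in terms of coset counts. Letting $s$ and $t$ denote the numbers of $H$-cosets meeting $S$ and $T$ respectively, the sets $SH$ and $TH$ are disjoint unions of $s$ and $t$ cosets, so $|SH|=s|H|$ and $|TH|=t|H|$, and the Kneser estimate becomes
\[
|ST|\ge (s+t-1)|H|.
\]
Write the nonzero coset-intersection sizes of $S$ as $p_1,\dots,p_s$ and those of $T$ as $q_1,\dots,q_t$, so that $|S|=\sum_i p_i$, $|T|=\sum_j q_j$, and each $p_i,q_j\le|H|$. Setting $p_{\max}=\max_i p_i$ and $q_{\max}=\max_j q_j$ and applying the hypothesis $|aH\cap A|+|bH\cap B|\le|H|+1$ to the two cosets realizing these maxima (together with $S\subset A$ and $T\subset B$) yields $p_{\max}+q_{\max}\le|H|+1$.

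The crux is then a one-line estimate: bounding all but the largest intersection in each family by $|H|$ gives
\[
|S|+|T|\le \big((s-1)|H|+p_{\max}\big)+\big((t-1)|H|+q_{\max}\big)\le (s+t-2)|H|+(|H|+1)=(s+t-1)|H|+1.
\]
Comparing this with $|ST|\ge(s+t-1)|H|$ produces $|ST|\ge|S|+|T|-1$, contradicting the assumption and completing the argument. I expect the only real obstacle to be bookkeeping rather than conceptual: one must ensure that the subgroup delivered by Kneser's Theorem is genuinely nontrivial and finite, so that the hypothesis is legitimately applicable to it, and that the coset counts $s,t$ are correctly matched to the terms $|SH|,|TH|$. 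Once these points are secured, the pigeonhole inequality $p_{\max}+q_{\max}\le|H|+1$ closes the gap exactly, with no slack to spare.
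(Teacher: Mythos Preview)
Your proof is correct and follows essentially the same approach as the paper: both argue by contradiction, apply the Kneser-type inequality $|ST|\ge|SH|+|TH|-|H|$ to the stabilizer $H$ of $ST$, and then play this off against the coset-intersection hypothesis. The only cosmetic difference is bookkeeping---the paper phrases the count as ``total $H$-holes $\le|H|-2$'' and exhibits a single pair of cosets with $|aH\cap A|+|bH\cap B|\ge|H|+2$, whereas you bound $|S|+|T|$ via $p_{\max}+q_{\max}\le|H|+1$; your explicit verification that $H$ is nontrivial and finite is a point the paper leaves implicit.
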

\begin{proof}
Assume to the contrary, there exist subsets $S\subset A$, $T\subset B$ such that 
\begin{align}\label{eqqq*}
|ST|<|S|+|T|-1.
\end{align}
Applying Kneser's Theorem to $S$ and $T$, we have 
\begin{align}\label{eqqq**}
|ST|\geq |SH|+|TH|-|H|,
\end{align}
where $HST=ST$.

Combining \eqref{eqqq*} and \eqref{eqqq**} together with the fact that $|SH\setminus  S|=|SH|-|S|$ and $TH\setminus T|=|TH|-|T|$ we arrive at the following:
\begin{align}\label{eqqq***}
|SH\setminus S|+|TH\setminus T|\leq |H|-2.
\end{align}
It follows from \eqref{eqqq***} that there are at most $|H|-2$ $H$-holes, namely, elements from $HA\setminus A$ or $HB\setminus B$. Take $a\in S$ and $b\in T$. Since $S\subset A$ and $T\subset B$, so
\begin{align*}
|aH\cap A|+|bH\cap B|\geq &|aH\cap S|+|bH\cap T|\\
\geq& 2|H|-\text{(total number of $H$-holes)}\\
\geq&2|H|-(|H|-2)=|H|+2,
\end{align*}
contradicting our assumptions on $A$ and $B$. 
\end{proof}

The following theorem \cite{9} is a stronger form of Kneser's addition theorem in the sense that in Kneser's Theorem, the subgroup that stabilizes $AB$ appears to depend on both sets $A$ and $B$, but in the following theorem, the stabilizer actually may be seen to depend only on $A$.

\begin{theorem}\label{ta}
Let $G$ be an abelian group, and let $A\subset G$ be a finite subset of $G$. Then 
\begin{itemize}
\item
either for every finite  subset $B$ of $G$,  we have 
\begin{align*}
|AB|\geq |A|+|B|-1,
\end{align*}
\item
or there exists a subgroup $H\neq\{e\}$ of $G$ such that, for every finite subset $B$ of $G$ satisfying
\begin{align*}
|AB|<|A|+|B|-1,
\end{align*}
we have $HAB=AB$.
\end{itemize}
\end{theorem}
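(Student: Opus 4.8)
The plan is to deduce Theorem~\ref{ta} from Kneser's Theorem by a minimality argument on stabilizers. If every finite subset $B$ of $G$ already satisfies $|AB|\geq |A|+|B|-1$, then the first alternative holds and there is nothing to prove; so assume the collection $\mathcal{B}$ of \emph{deficient} sets, that is, finite $B$ with $|AB|<|A|+|B|-1$, is nonempty. For each $B\in\mathcal{B}$ set $H_B=\{x\in G\mid xAB=AB\}$. Kneser's Theorem (Theorem~\ref{tK}) gives $|AB|\geq|A|+|B|-|H_B|$, which together with $|AB|<|A|+|B|-1$ forces $|H_B|\geq 2$, so each $H_B$ is a nontrivial finite subgroup (finite because it permutes the finite set $AB$ by translation). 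Since $H_BAB=AB$ holds exactly when $H_B\subseteq\mathrm{Stab}(AB)=H_B$, the desired conclusion ``$HAB=AB$ for all $B\in\mathcal{B}$'' is equivalent to $H\subseteq H_B$ for all $B\in\mathcal{B}$. Thus the whole theorem reduces to the single assertion that $\bigcap_{B\in\mathcal{B}}H_B\neq\{e\}$; any nontrivial subgroup of this intersection may then be taken as the required $H$.

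To produce such a common subgroup I would argue by minimality. Each $H_B$ is finite, so I may choose $B_0\in\mathcal{B}$ with $|H_{B_0}|$ as small as possible and set $H:=H_{B_0}$. If $H\subseteq H_B$ for every $B\in\mathcal{B}$ we are done. Otherwise there is some $B_1\in\mathcal{B}$ with $H\cap H_{B_1}\subsetneq H$. The heart of the argument is then the following merging claim $(\star)$: \emph{for any two deficient sets $B_0,B_1$ there exists a deficient set $B_2$ with $H_{B_2}\subseteq H_{B_0}\cap H_{B_1}$.} Granting $(\star)$, the set $B_2$ would satisfy $|H_{B_2}|\leq |H_{B_0}\cap H_{B_1}|<|H_{B_0}|$, contradicting the minimality of $|H_{B_0}|$. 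Hence no such $B_1$ exists, $H=H_{B_0}$ is contained in every $H_B$, and the theorem follows.

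The main obstacle is therefore $(\star)$, which says that the genuine additive obstruction responsible for the deficiency of $A$ already lives in the common refinement $H_{B_0}\cap H_{B_1}$. To attack it I would exploit the periodic structure furnished by Kneser's Theorem: the products $AB_0$ and $AB_1$ are unions of cosets of $H_{B_0}$ and $H_{B_1}$ respectively, and, via the equivalent formulation of Kneser's Theorem stated above, $A$ has at most $|H_{B_i}|-2$ holes modulo $H_{B_i}$, so that $A$ is nearly periodic with respect to both subgroups. The task is to convert this simultaneous near-periodicity into near-periodicity modulo $H_{B_0}\cap H_{B_1}$ and to realize the latter as the stabilizer of some deficient product; I expect this to require iterating Kneser's Theorem on suitable combinations of $B_0$ and $B_1$ (such as $B_0\cup B_1$ or $B_0B_1$), together with Kemperman's structure theorem to control the critical pairs $(A,B_i)$ precisely. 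An equivalent and perhaps cleaner statement worth isolating first, consistent with all small examples, is that the stabilizers $\{H_B\}_{B\in\mathcal{B}}$ are totally ordered by inclusion; establishing this comparability directly would immediately yield a least element and hence the desired common subgroup $H$.
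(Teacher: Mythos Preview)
The paper does not give its own proof of Theorem~\ref{ta}; it is quoted as a known result of Balandraud~\cite{9} and then used as a black box. So there is no in-paper argument to compare against, but it is still worth assessing your proposal on its own terms.

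Your reduction is correct up to the point where you isolate the merging claim~$(\star)$: choosing $B_0$ with $|H_{B_0}|$ minimal and arguing by contradiction is a clean framework, and your observation that ``$HAB=AB$'' is equivalent to ``$H\subseteq H_B$'' is exactly the right reformulation. The difficulty is that $(\star)$ is the \emph{entire content} of the theorem, and you have not proved it. None of the candidate constructions you suggest ($B_0\cup B_1$, $B_0B_1$, or invoking Kemperman's structure theorem on each pair $(A,B_i)$ separately) gives any control on the stabilizer of the new product: knowing that $AB_i$ is $H_{B_i}$-periodic says nothing about the stabilizer of $A(B_0\cup B_1)$ or $AB_0B_1$, and there is no mechanism in your sketch forcing the new stabilizer to sit inside $H_{B_0}\cap H_{B_1}$. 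Worse, if $H_{B_0}\cap H_{B_1}=\{e\}$ then $(\star)$ asks for a deficient $B_2$ with trivial stabilizer, which Kneser's Theorem itself rules out; so $(\star)$ as you have phrased it presupposes that the intersection is already nontrivial, which is what you are trying to prove. Your alternative conjecture that the stabilizers $\{H_B\}$ are totally ordered by inclusion is strictly stronger than the theorem and, to my knowledge, not established anywhere.

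Balandraud's actual proof in~\cite{9} does not proceed by manipulating pairs of deficient sets at all; it uses Hamidoune's isoperimetric (atom) method, working with $1$-atoms of $A$ to produce a canonical subgroup attached to $A$ alone, and then showing that any deficient $B$ must respect that subgroup. If you want a self-contained argument, that is the machinery you need to import; a purely Kneser-plus-minimality approach of the type you outline is not known to close the gap.
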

The following result from \cite{12} characterizes relatively large product sets. Essentially, it serves as a generalization of the Cauchy-Davenport Theorem \cite{10,13} and Chowla's Theorem \cite{11} in the context of abelian groups.
\begin{theorem}\label{t*}
Let $G$ be an abelian group and $A$ be a finite subset of $G$ containing the neutral element $e$. Suppose that for every proper subgroup $H$ of $G$ we have $A\cap H=\{e\}$. Then for any finite subset $S$ of $G$, we have
\begin{align*}
|AS|\geq \min\{|G|,|A|+|S|-1\}.
\end{align*}
\end{theorem}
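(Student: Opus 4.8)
The plan is to apply Kneser's theorem to the product set $AS$ and to examine the stabilizer $H=\{x\in G\mid xAS\subset AS\}$, bringing the hypothesis $A\cap H=\{e\}$ into play only when $H$ turns out to be a proper nontrivial subgroup. If the inequality $|AS|\geq |A|+|S|-1$ already holds, we are done immediately, since $\min\{|G|,|A|+|S|-1\}\leq |A|+|S|-1$. So I would assume, aiming for a contradiction in the critical case, that $|AS|<|A|+|S|-1$.

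Under this assumption the reformulated version of Kneser's theorem (Theorem~\ref{tK}) yields a nontrivial subgroup $H$ with $HAS=AS$ and
\[
|AS|\geq |AH|+|SH|-|H|.
\]
I would then split into two cases according to whether $H=G$ or $H$ is a proper nontrivial subgroup. If $H=G$, then $AS$ is $G$-periodic; being nonempty and finite, this forces $G$ to be finite and $AS=G$, so that $|AS|=|G|\geq\min\{|G|,|A|+|S|-1\}$ and the conclusion holds.

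The heart of the argument is the remaining case, where $H$ is proper and nontrivial, so the hypothesis gives $A\cap H=\{e\}$. Here I would count the $H$-holes of $A$: since $e\in A$, the coset $eH=H$ is contained in $AH$, yet $A$ meets $H$ only in $e$; hence at least $|H|-1$ elements of $H$ lie in $AH\setminus A$, giving $|AH|\geq |A|+|H|-1$. Feeding this, together with the trivial bound $|SH|\geq|S|$, into the Kneser inequality above produces
\[
|AS|\geq (|A|+|H|-1)+|S|-|H|=|A|+|S|-1,
\]
which contradicts the standing assumption. Thus $|AS|<|A|+|S|-1$ can occur only when $H=G$, and in all cases $|AS|\geq\min\{|G|,|A|+|S|-1\}$.

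I expect the main obstacle --- indeed essentially the only nonroutine step --- to be the $H$-hole count $|AH|\geq|A|+|H|-1$, which is exactly where both hypotheses $e\in A$ and $A\cap H=\{e\}$ are consumed; the remainder is bookkeeping around Kneser's theorem, with the one subtlety that the degenerate stabilizer $H=G$ must be peeled off separately, as it is precisely what accounts for the $|G|$ term in the minimum.
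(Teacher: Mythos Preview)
Your argument is correct. Note, however, that the paper does not supply its own proof of this statement: Theorem~\ref{t*} is quoted from \cite{12} as background, so there is no in-paper proof to compare against. What you have written is essentially the standard Kneser-based derivation of this Chowla-type inequality: assume $|AS|<|A|+|S|-1$, invoke the strengthened Kneser bound $|AS|\geq|AH|+|SH|-|H|$ for the (necessarily nontrivial, finite) stabilizer $H$, dispose of the case $H=G$ via $AS=G$, and in the proper case use $e\in A$ together with $A\cap H=\{e\}$ to obtain $|AH|\geq|A|+|H|-1$, forcing a contradiction.

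Two small remarks. First, you might make explicit why the stabilizer $H$ is finite (it embeds into the permutation group of the finite set $AS$), since otherwise the phrase ``$|H|$'' and the case split $H=G$ versus $H$ proper require justification when $G$ is infinite. Second, your citation ``(Theorem~\ref{tK})'' points to the original Kneser inequality rather than to the unlabeled reformulation you actually use; this is cosmetic but worth tidying.
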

Another result from \cite{12} generalizes Olson's result \cite{26} for the case of abelian groups is a strong building block employed in Theorem \ref{t1} parts (2), (3)  to ensure matchability in sets that are not contained in a union of certain number of cosets of any proper subgroup of the given abelian group.
\begin{theorem}\label{t**}
Let $m\geq1$ and $A_1,\ldots,A_m$ be finite, nonempty subsets of an abelian group $G$, such that no $A_i$ is contained in a union of $l$ cosets of any proper finite subgroup of $G$, then 
\begin{align*}
\left|A_1\cdots A_m\right|\geq \min\left\{|G|,\left(\frac{l}{l+1}+\frac{1}{(l+1)m}\right)\sum_{i=1}^m |A_i|\right\}.
\end{align*}
\end{theorem}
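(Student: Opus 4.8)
The plan is to derive the bound from the multi-summand form of Kneser's theorem applied to the stabilizer of the full product $S:=A_1\cdots A_m$. Write $H:=\{x\in G\mid xS=S\}$ for the stabilizer of $S$. Since $S$ is finite and nonempty, $H$ is a finite subgroup. If $H=G$, then $gS=S$ for all $g$, so picking any $s\in S$ gives $Gs\subseteq S$ and hence $S=G$; thus $|S|=|G|$ and the asserted inequality (being a minimum with $|G|$) holds trivially. So I may assume $H$ is a proper finite subgroup of $G$, possibly trivial.

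The key input is the $m$-summand version of Theorem \ref{tK}: with $H=\mathrm{Stab}(S)$ one has
\[
|S|\geq \sum_{i=1}^m |A_iH|-(m-1)|H|.
\]
This follows from the two-set statement in Theorem \ref{tK} by induction on $m$, the delicate point being how the stabilizer of a partial product sits inside $H$; I would either carry out this induction or invoke it as a standard iterated consequence of Kneser's theorem. This is the step I expect to be the main obstacle, because — unlike a naive induction that groups $A_1\cdots A_{m-1}$ against $A_m$ — the coset hypothesis simply cannot be applied to partial products, so the symmetric treatment of all the $A_i$ through the \emph{common} stabilizer $H$ is essential.

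Next I would feed the hypothesis into this bound through $H$ itself. For each $i$, let $k_i$ denote the number of distinct $H$-cosets meeting $A_i$, so that $|A_iH|=k_i|H|$ and, trivially, $|A_i|\leq k_i|H|$. Because $H$ is a proper finite subgroup, the assumption that $A_i$ is not contained in a union of $l$ cosets of $H$ forces $k_i\geq l+1$, and therefore $\sum_{i=1}^m k_i\geq m(l+1)$.

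Finally I would combine these estimates by a short weighted computation. Put $c:=\frac{l}{l+1}+\frac{1}{(l+1)m}=\frac{ml+1}{(l+1)m}$, and note the identity $1-c=\frac{m-1}{(l+1)m}$. Using $|A_i|\leq k_i|H|$ (and $c>0$) followed by $\sum k_i\geq m(l+1)$, one gets
\begin{align*}
\sum_{i=1}^m |A_iH|-c\sum_{i=1}^m |A_i|
&\geq \sum_{i=1}^m k_i|H|(1-c) \\
&= \frac{(m-1)|H|}{(l+1)m}\sum_{i=1}^m k_i \\
&\geq (m-1)|H|.
\end{align*}
Rearranging and substituting into the Kneser bound yields $|S|\geq \sum_{i=1}^m|A_iH|-(m-1)|H|\geq c\sum_{i=1}^m|A_i|$, which together with the case $H=G$ gives $|A_1\cdots A_m|\geq\min\{|G|,\,c\sum_{i=1}^m|A_i|\}$, as required. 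All the content sits in the multi-summand Kneser input; the remaining algebra (the identity for $1-c$ and the bound on $\sum k_i$) is routine.
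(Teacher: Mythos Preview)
The paper does not prove Theorem~\ref{t**}; it is quoted from \cite{12} and used as a black box in the proof of Theorem~\ref{t1} parts (2) and (3). So there is no in-paper argument to compare against.

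Your proof is correct. The $m$-summand form of Kneser's theorem,
\[
|A_1\cdots A_m|\ \geq\ \sum_{i=1}^m|A_iH|-(m-1)|H|\qquad\text{with }H\text{ the stabilizer of }A_1\cdots A_m,
\]
is a standard iterated consequence of Theorem~\ref{tK} (it appears in the texts \cite{17,25}), so invoking it is legitimate; you are right that this is where the real content lies. Once that is granted, your computation is clean: the case $H=G$ forces $S=G$, and when $H\neq G$ the hypothesis applied to the proper finite subgroup $H$ (note that $H=\{e\}$ is allowed, since ``proper'' here means $\neq G$; in that case the condition simply reads $|A_i|\geq l+1$) gives $k_i\geq l+1$, and the identity $1-c=\dfrac{m-1}{(l+1)m}$ then turns the Kneser bound into the desired $|S|\geq c\sum_i|A_i|$.
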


The following theorem is well-known as the {\it{Multiplicity Bound}} and it arises as a consequence of Kneser's Theorem. This theorem was originally proven by Kemperman \cite{23}. It provides valuable insights into the relationship between the size of the product set $AB$ to the multiplicities of elements within $AB$.
\begin{theorem}\label{tb}
Let $G$ be an abelian group with $A,B\subset G$ finite and nonempty. If $|AB|\leq |A|+|B|-k$, then $r_{A,B}(x)\geq k$, for all $x\in AB$, where 
\begin{align*}
r_{A,B}(x)=\left|\left\{(a,b)\in A\times B\mid  ab=x\right\}\right|.
\end{align*}
\end{theorem}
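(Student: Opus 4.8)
The plan is to prove the sharp form of the bound, namely that $r_{A,B}(x)\ge |A|+|B|-|AB|$ for every $x\in AB$; since the hypothesis $|AB|\le |A|+|B|-k$ is exactly $|A|+|B|-|AB|\ge k$, this will immediately give $r_{A,B}(x)\ge k$. If $|AB|\ge |A|+|B|-1$, then $k\le 1$ and the conclusion is trivial, because $x\in AB$ already provides at least one representation; hence I may assume $|AB|\le |A|+|B|-2$. Throughout I will use the elementary identity $r_{A,B}(x)=|A\cap xB^{-1}|$, where $xB^{-1}=\{xb^{-1}\mid b\in B\}$, obtained by solving $ab=x$ for $a$.

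First I would let $H$ be the stabilizer of $AB$, so that $AB$ is $H$-periodic, and apply Kneser's Theorem (Theorem \ref{tK}) in its equivalent form $|AB|\ge |AH|+|BH|-|H|$ with $HAB=AB$, which is available precisely because $|AB|<|A|+|B|-1$. Writing $\rho_A=|AH|-|A|$ and $\rho_B=|BH|-|B|$ for the numbers of $H$-holes of $A$ and of $B$, this rearranges to the key inequality $|A|+|B|-|AB|\le |H|-\rho_A-\rho_B$. It therefore suffices to exhibit, for each $x\in AB$, at least $|H|-\rho_A-\rho_B$ representations.

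The decisive step is to localize the count to a single pair of cosets. Fix any representation $x=a_0b_0$ with $a_0\in A$ and $b_0\in B$. The map $\phi\colon a_0H\to b_0H$, $\phi(a)=a^{-1}x$, is a bijection between these $H$-cosets (it is inversion followed by translation by $x$, and $x(a_0H)^{-1}=b_0H$). Every $a\in a_0H$ with $a\in A$ and $\phi(a)\in B$ yields a representation of $x$, so restricting to this one coset already gives
\[
r_{A,B}(x)\ \ge\ \bigl|\,(A\cap a_0H)\ \cap\ \phi^{-1}(B\cap b_0H)\,\bigr|.
\]
Applying inclusion–exclusion inside the universe $a_0H$, which has size $|H|$, and using $|\phi^{-1}(B\cap b_0H)|=|B\cap b_0H|$, the right-hand side is at least $|A\cap a_0H|+|B\cap b_0H|-|H|=|H|-h_A-h_B$, where $h_A$ and $h_B$ count the $H$-holes of $A$ inside $a_0H$ and of $B$ inside $b_0H$. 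Since the holes in a single coset are at most the total numbers of holes, $h_A\le \rho_A$ and $h_B\le \rho_B$, whence $r_{A,B}(x)\ge |H|-\rho_A-\rho_B\ge |A|+|B|-|AB|\ge k$, as desired.

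The main obstacle I anticipate is exactly this localization: one must exploit the $H$-periodicity of $AB$ furnished by Kneser's Theorem to extract a lower bound for the multiplicity from a single coset pair $a_0H,\ b_0H$ through the bijection $\phi$, and then observe that the per-coset hole counts are dominated by the global counts $\rho_A,\rho_B$. Setting up $\phi$ correctly and checking that the intra-coset inclusion–exclusion lines up with the quantity supplied by Kneser's Theorem is where care is required; the reductions in the first paragraph and the final cardinality manipulations are routine.
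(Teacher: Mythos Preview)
The paper does not supply a proof of Theorem~\ref{tb}; it is quoted as Kemperman's Multiplicity Bound and attributed to \cite{23}. Your argument is correct and is essentially the standard derivation from Kneser's Theorem: reduce to the nontrivial range $|AB|<|A|+|B|-1$, invoke the strong form $|AB|\ge |AH|+|BH|-|H|$ with $H$ the stabilizer of $AB$ to obtain $|A|+|B|-|AB|\le |H|-\rho_A-\rho_B$, and then, for a fixed $x=a_0b_0$, use the bijection $a\mapsto a^{-1}x$ between $a_0H$ and $b_0H$ together with inclusion--exclusion inside a single coset to produce at least $|H|-h_A-h_B\ge |H|-\rho_A-\rho_B$ representations. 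All steps are sound, including the localization to one coset pair and the domination $h_A\le\rho_A$, $h_B\le\rho_B$; the preliminary identity $r_{A,B}(x)=|A\cap xB^{-1}|$ is correct but not actually needed in your main count.
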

\begin{corollary}\label{cg}
Let $G$ be an abelian group and $C$ be a finite Sidon subset of $G$. If $A, B$ be subsets of $G$ with $AB\subset C$, then $|AB|\geq|A|+|B|-1$.
\end{corollary}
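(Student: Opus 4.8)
The plan is to argue by contradiction, using the \emph{Multiplicity Bound} (Theorem~\ref{tb}) to manufacture a repeated product and then invoking the Sidon condition on $C$ to rule it out. We may assume $A$ and $B$ are nonempty, as is implicit. Suppose toward a contradiction that $|AB|<|A|+|B|-1$, i.e.\ $|AB|\leq |A|+|B|-2$. Applying Theorem~\ref{tb} with $k=2$ to the finite nonempty sets $A$ and $B$ gives $r_{A,B}(x)\geq 2$ for every $x\in AB$; in particular, fixing any $x\in AB$ we obtain two distinct pairs $(a_1,b_1),(a_2,b_2)\in A\times B$ with $a_1b_1=a_2b_2=x$. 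Since the pairs are distinct and $a_1b_1=a_2b_2$, we must have $a_1\neq a_2$ and $b_1\neq b_2$.

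The crucial step is to feed the two ``cross'' products back into the Sidon hypothesis. Because $AB\subset C$, all four elements $a_1b_1,\ a_2b_2,\ a_1b_2,\ a_2b_1$ lie in $C$. Commutativity of $G$ yields the identity $(a_1b_2)(a_2b_1)=a_1a_2b_1b_2=(a_1b_1)(a_2b_2)=x\cdot x$, so the equation $uv=st$ holds in $C$ with $u=v=x$ and $s=a_1b_2,\ t=a_2b_1$. The Sidon property of $C$ then forces $\{a_1b_2,a_2b_1\}=\{x,x\}$ as multisets, i.e.\ $a_1b_2=a_2b_1=x$. Comparing $a_1b_2=x=a_1b_1$ gives $b_1=b_2$, contradicting $b_1\neq b_2$. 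This contradiction establishes $|AB|\geq |A|+|B|-1$.

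I expect the main obstacle to be the bookkeeping in the key step: one must recognize that the right objects to form are the cross products $a_1b_2$ and $a_2b_1$, notice that their product coincides with $x^2$, and then apply the Sidon condition in the form that permits a repeated element $x$ on one side, so that the trivial-solution conclusion reads $\{a_1b_2,a_2b_1\}=\{x,x\}$ rather than as an equality of two genuinely distinct elements. Everything else is routine: translating the strict inequality into $|AB|\le|A|+|B|-2$ and invoking Theorem~\ref{tb} to produce the collision. As a sanity check, the very same cross-product identity shows that more is in fact true, namely that the product map $A\times B\to C$ is injective, so that $|AB|=|A||B|\geq |A|+|B|-1$ by $(|A|-1)(|B|-1)\geq 0$; this gives an alternative, self-contained route via the pigeonhole principle should one wish to bypass the Multiplicity Bound.
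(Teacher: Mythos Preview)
Your proof is correct and follows essentially the same approach as the paper's own proof, which likewise reduces to the Multiplicity Bound (Theorem~\ref{tb}) combined with the Sidon property of $C$. The paper asserts ``clearly $r_{A,B}(x)=1$'' and then applies the contrapositive of Theorem~\ref{tb}; your cross-product argument $(a_1b_2)(a_2b_1)=x^2$ is precisely what justifies that ``clearly,'' so your version is in fact a more detailed rendering of the same idea (and your closing remark that this even yields $|AB|=|A|\,|B|$ is a nice bonus).
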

\begin{proof}
Since $C$ is a Sidon subset, clearly $r_{A,B}(x)=1$, for every $x\in AB$. Thus by the contrapositive of Theorem \ref{tb}, we deduce that $|AB|>|A|+|B|-2$, implying $|AB|\geq|A|+|B|-1$.
\end{proof}
We will take advantage of the following result of Hamidoune \cite{18} to prove Theorem \ref{t1}, part (6).
\begin{theorem}\label{tx}
Let $G$ be an arbitrary group, and let $A$ and $B$ be two proper finite subsets of $G$ such that $A\cup AB\neq A\langle B\rangle$, where $\langle B\rangle$ is the subgroup generated by $B$.  Then $|A\cup AB|\geq|A|+\min\{|B|,v(B)\}$, where $v(B)$ is the minimum order of an element in $B$.
\end{theorem}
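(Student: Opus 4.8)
The plan is to prove the bound by induction on $|B|$, following the strategy of Hamidoune's isoperimetric method, but I will first sketch a reduction that makes the structural hypothesis $A\cup AB\neq A\langle B\rangle$ do its work. The key observation is that the quantity $|A\cup AB|-|A|$ measures the ``boundary growth'' of $A$ under right multiplication by $B$, and the hypothesis prevents $A$ from being saturated with respect to $\langle B\rangle$. First I would set $v=v(B)$ and dispose of the trivial case $B\subset\{$elements of infinite or large order$\}$ separately from the case where $v\le|B|$, since the minimum in the conclusion is governed by $\min\{|B|,v\}$. Throughout, the goal is to locate at least $\min\{|B|,v\}$ distinct elements of $AB\setminus A$ (equivalently, $(A\cup AB)\setminus A$ has this size), which combined with $A\subset A\cup AB$ yields the stated lower bound.

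Next I would carry out the core counting argument. Fix any $b_0\in B$ of order $v$ (if $v\le|B|$) and consider the cyclic structure it generates: the cosets of $\langle b_0\rangle$ partition $G$, and within each coset the sets $A$ and $AB$ interact cyclically. The hypothesis $A\cup AB\neq A\langle B\rangle$ guarantees there is at least one ``hole'', i.e.\ an element $g\in A\langle B\rangle$ with $g\notin A\cup AB$. Starting from such a hole and multiplying repeatedly by $b_0$, I would trace out a chain that must cross from outside $A\cup AB$ into $A$, and each crossing contributes a new element of $AB\setminus A$. When $v\le|B|$ the cyclic orbit under $b_0$ has length exactly $v$, and a careful accounting of crossings along these orbits produces $\ge v$ new elements; when $|B|<v$, one instead exploits the full spread of $B$ (rather than a single generator) to produce $\ge|B|$ new elements, using that $A$ is a \emph{proper} finite subset so that translating $A$ by the $|B|$ distinct elements of $B$ cannot all land inside $A$. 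This is where the properness of $A$ and $B$ is essential.

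For the inductive step I would remove a carefully chosen element $b$ from $B$ and apply the inductive hypothesis to $B'=B\setminus\{b\}$, provided the structural hypothesis is inherited by the pair $(A,B')$; the delicate point is that $A\cup AB'\neq A\langle B'\rangle$ need not follow automatically, so one must handle the degenerate case $A\cup AB'=A\langle B'\rangle$ by a direct argument that the removed translate $Ab$ already supplies enough new elements. The main obstacle, as I see it, is precisely this bookkeeping: ensuring that the passage from $B$ to $B'$ either preserves the non-saturation hypothesis or else forces $Ab\setminus(A\cup AB')$ to be large enough to compensate. Handling the two regimes $v\le|B|$ and $|B|<v$ uniformly, while keeping track of which elements are genuinely new, is the technical heart of the proof; Kneser's Theorem (Theorem~\ref{tK}) can be invoked to control $|AB|$ from below in the saturated sub-case, but the non-abelian generality here means one leans on the isoperimetric/connectivity structure of $\langle B\rangle$ rather than on Kneser directly.
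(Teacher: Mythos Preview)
The paper does not prove Theorem~\ref{tx} at all: it is quoted verbatim from Hamidoune's paper \cite{18} and used as a black box in the proof of Theorem~\ref{t1}~part~(6). So there is no ``paper's own proof'' to compare your proposal against.

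As for the proposal itself, it is a plausible outline of the kind of argument Hamidoune uses (isoperimetric/connectivity methods on Cayley graphs), but in its current form it is a sketch rather than a proof. The two places where real work is needed---and where you yourself flag uncertainty---are (i) the ``careful accounting of crossings'' along $\langle b_0\rangle$-orbits that is supposed to yield at least $v$ new elements, and (ii) the inductive step where removing an element of $B$ may destroy the hypothesis $A\cup AB'\neq A\langle B'\rangle$. Neither of these is carried out, and the second is genuinely delicate: it is not clear that a single removed translate $Ab$ can always supply enough new elements to compensate when the hypothesis fails for $B'$. Your invocation of Kneser's Theorem~\ref{tK} at the end is also problematic, since $G$ is an arbitrary (not necessarily abelian) group and Kneser's theorem does not apply in that generality; Hamidoune's original argument avoids Kneser precisely for this reason and works directly with the connectivity structure of the Cayley graph. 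If you want to reconstruct the proof, the right place to look is \cite{18} itself.
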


The central role in proving Theorem \ref{t2} and classifying unmatchable sets based on their additive structures is attributed to the following profound result by Kemperman \cite{22}.

\begin{theorem}\label{ty}
Let $A$ and $B$ be finite subsets of an abelian group $G$ satisfying $|AB|\leq|A|+|B|-1$ and $\min\{|A|,|B|\}>1$. Then either $AB$ is a progression or $AB$ is quasi-periodic.
\end{theorem}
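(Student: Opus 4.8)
The plan is to dichotomize according to the stabilizer $H=\{x\in G:xAB=AB\}$ of the product set. Kneser's Theorem (Theorem~\ref{tK}) gives $|AB|\ge|A|+|B|-|H|$, so the hypothesis $|AB|\le|A|+|B|-1$ is consistent in exactly two regimes. If $|H|>1$, then $AB$ is a union of $H$-cosets, i.e.\ $H$-periodic; taking $A_1=AB$ and $A_0=\emptyset$ exhibits a quasi-periodic decomposition with $A_1$ nonempty, so $AB$ is quasi-periodic and we are done. Hence the entire burden falls on the \emph{critical} case $H=\{e\}$, in which Kneser forces the exact equality $|AB|=|A|+|B|-1$.

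For the critical case I would argue by induction on $m=\min\{|A|,|B|\}$, which by the symmetry $AB=BA$ we may take to be $|B|$. The reduction engine is the Dyson $e$-transform: for $c\in G$ set $A_c=A\cup cB$ and $B_c=B\cap c^{-1}A$; a short computation gives $|A_c|+|B_c|=|A|+|B|$ together with the crucial inclusion $A_cB_c\subseteq AB$. Choosing $c\in AB^{-1}$ with $cB\not\subseteq A$ makes $B_c$ a nonempty proper subset of $B$, so $|B_c|<|B|$ while $A_cB_c$ still sits inside $AB$; Kneser applied to $(A_c,B_c)$ keeps the transformed pair critical, and the induction hypothesis then describes $A_cB_c$ as a progression or as quasi-periodic. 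Throughout I would monitor the \emph{unique-expression elements} $x\in AB$ with $r_{A,B}(x)=1$, whose existence in the critical regime is exactly what the Multiplicity Bound (Theorem~\ref{tb}) leaves open at $k=1$; these anchor the endpoints of progressions and the boundaries of quasi-periods.

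The base case $m=2$ is instructive and already exhibits both alternatives. Writing $B=\{b_1,b_2\}$, $d=b_2b_1^{-1}$, and $A'=Ab_1$, the hypothesis becomes $|A'\cup A'd|\le|A'|+1$, that is $|A'\cap A'd|\ge|A'|-1$. If $A'd=A'$ then $A'=AB$ is $\langle d\rangle$-periodic and hence quasi-periodic; if instead $|A'\cap A'd|=|A'|-1$, a one-boundary count on the $\langle d\rangle$-orbits forces $A'$ to be a single arc, so $A'$ and therefore $AB$ is a progression with ratio $d$. Lifting this description along the transform is the inductive step: from the known structure of $A_cB_c$ one must reconstruct that of $AB$ and decide which of the two alternatives survives.

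The main obstacle is precisely this reconstruction, which is what makes Kemperman's theorem genuinely deep. Two phenomena must be controlled. First, the transform can degenerate, producing a singleton $B_c$; then $A_cB_c$ equals a translate of $A\cup cB$ rather than a set to which the induction hypothesis applies, and this case must be analyzed by hand. Second, the transform may either preserve the product set, so that the structure of $A_cB_c$ transfers directly to $AB$, or create a nontrivial stabilizer, and disentangling these possibilities while matching up translates of $A$, $B$, and $AB$ requires an intricate case analysis. The heart of the matter is to show that whenever the arithmetic-progression alternative fails, a genuine quasi-period is forced, namely a nonempty $A_1\subset AB$ that is either $H$-periodic or a single $H$-coset; tracking the unique-expression elements so that their disappearance under the transform certifies periodicity while their persistence certifies a progression is where the real work lies, with everything else being bookkeeping built on Kneser's Theorem.
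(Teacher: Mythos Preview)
The paper does not prove this statement at all: Theorem~\ref{ty} is stated in the preliminaries section as a known result of Kemperman, with a citation to \cite{22}, and is then used as a black box in the proof of Theorem~\ref{t2}. There is therefore no ``paper's own proof'' against which to compare your attempt.

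As for your proposal itself, it is an honest outline rather than a proof, and you say as much. The initial dichotomy on the stabilizer is correct and dispatches the periodic case immediately; the base case $m=2$ is handled cleanly. But the inductive step is precisely where Kemperman's argument lives, and you explicitly flag the reconstruction from $A_cB_c$ back to $AB$ as ``the main obstacle'' and ``an intricate case analysis'' without carrying it out. In particular: the Dyson transform can collapse $B_c$ to a singleton, in which case the induction hypothesis gives no information; and even when $|B_c|\ge 2$, knowing that $A_cB_c$ is a progression or quasi-periodic does not by itself force the same for the possibly larger set $AB$, since in the critical case one may have $A_cB_c\subsetneq AB$. Controlling this gap is exactly the content of Kemperman's structure theorem, and your sketch does not supply the mechanism (tracking unique-expression elements is the right instinct, but you have not shown how their behavior under the transform actually pins down the structure of $AB$). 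So what you have is a correct high-level roadmap of the classical proof, not a proof; for the paper's purposes the citation to \cite{22} is what is required.
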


While classifying matchable subsets, it is evident that if $AB\cap A=\emptyset$, then A is matched to B. In particular, every bijection from A to B is a matching. Clearly, this represents an extreme situation. On the other end of the spectrum, we encounter a situation where $AB = A$. Then under this condition, $A$ cannot be matched to $B$. This scenario is explored in \cite{5}, and the following proposition is established. This result will be employed in the proof of Theorem \ref{t1} part (2).

\begin{proposition}\label{p***}
Let $A$ and $B$ be nonempty finite subsets of an arbitrary group $G$. Assume that $|A|\leq|B|$ and that $AB=A$. Then $B$ is a subgroup of $G$, and $A$ is a coset of $B$.
\end{proposition}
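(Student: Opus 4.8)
The plan is to exploit the fact that, in any group, left multiplication by a fixed element is a bijection of $G$, so $|aB|=|B|$ for every $a\in A$. Since $AB=A$, for each $a\in A$ we have $aB\subseteq AB=A$, whence $|B|=|aB|\le|A|$; together with the hypothesis $|A|\le|B|$ this forces $|A|=|B|$. This cardinality collapse is the mechanism that will drive everything, and it is worth noting that no additive-combinatorial machinery (Kneser, Kemperman, etc.) is needed here: the whole argument is elementary counting plus bijectivity of translation.

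First I would upgrade each inclusion $aB\subseteq A$ to an equality. Because $|aB|=|B|=|A|$ and $A$ is finite, the inclusion $aB\subseteq A$ is in fact an equality $aB=A$ for every $a\in A$. Fix one element $a_0\in A$; then $A=a_0B$, and consequently $B=a_0^{-1}A$. The crucial step is to transfer a stabilizing property onto the set $B$ itself: for an arbitrary $a\in A$ we have $aB=A=a_0B$, so $a_0^{-1}aB=B$. As $a$ ranges over $A$, the element $a_0^{-1}a$ ranges over all of $a_0^{-1}A=B$; hence $bB=B$ for every $b\in B$.

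It then remains to conclude. Taking the union over $b\in B$ of the relations $bB=B$ gives $BB=B$, so $B$ is a nonempty finite subset of $G$ closed under the group operation; a standard argument (for $b\in B$, left multiplication by $b$ permutes the finite set $B$, which forces $1\in B$ and $b^{-1}\in B$) shows that $B$ is a subgroup. Finally $A=a_0B$ exhibits $A$ as a (left) coset of $B$, which completes the proof.

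I expect the only genuinely delicate point to be the transfer step: one must notice that the translates $a_0^{-1}a$ with $a\in A$ sweep out exactly $B$, so that the stabilizing identity $gB=B$ lands on the elements of $B$ rather than on some other translate of it. I would also emphasize that commutativity is never used, which matters since $G$ is an arbitrary group; everything rests on cardinality counting and the bijectivity of left translation, so finiteness of $B$ alone (not of $G$) suffices to pass from closure under the operation to the subgroup conclusion.
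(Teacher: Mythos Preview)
Your proof is correct. Note, however, that the paper does not supply its own proof of this proposition; it is quoted from reference \cite{5} and used as a black box in the argument for Theorem \ref{t1} part (2). Your elementary argument---cardinality counting to get $aB=A$ for every $a\in A$, then the transfer step $a_0^{-1}aB=B$ to obtain $bB=B$ for all $b\in B$, and finally the standard finite-closure-implies-subgroup observation---is the natural route and works in an arbitrary (possibly nonabelian) group, as required. There is nothing to compare against here beyond confirming that your reasoning is sound, which it is.
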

Similar to the dimension criteria for matchable bases of vector subspaces, cardinality criteria for matchable subsets are discussed and proved in \cite{4} as follows. This result is mainly constructed upon  Hall's Marriage Theorem, and Kneser's addition Theorem.
\begin{lemma}\label{l*}
Let $G$ be an abelian group and $A$ and $B$ be two finite subsets of $G$ with the same cardinality. Let $A=\{a_1,\ldots,a_n\}$. Then $A$ is matched to $B$ if and only if 
\begin{align*}
\left|\bigcap_{i\in J}\left(a_i^{-1}A\cap B\right)\right|\leq n-|J|,
\end{align*}
for all nonempty subset $J\subset\{1,\ldots,n\}$.
\end{lemma}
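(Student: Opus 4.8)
The plan is to recast the notion of a matching as a perfect matching in an auxiliary bipartite graph and then read off the criterion from Hall's Marriage Theorem. First I would build the bipartite graph $\Gamma$ with vertex classes $A=\{a_1,\ldots,a_n\}$ and $B$, joining $a_i$ to $b\in B$ precisely when $a_ib\notin A$. By definition, a bijection $f\colon A\to B$ is a matching in the group sense exactly when $a_if(a_i)\notin A$ for every $i$, which is literally the condition that $f$ be a set of edges of $\Gamma$ meeting each vertex once, i.e. a perfect matching. Since $|A|=|B|=n$, any matching of $\Gamma$ saturating $A$ is automatically a bijection, so the assertion ``$A$ is matched to $B$'' is equivalent to ``$\Gamma$ admits a perfect matching.''

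The key computation is the neighborhood of an arbitrary subset of $A$. For a single vertex $a_i$, the elements of $B$ that fail to be adjacent to it are exactly those $b\in B$ with $a_ib\in A$, that is $b\in a_i^{-1}A\cap B$; hence the neighborhood is $N(a_i)=B\setminus(a_i^{-1}A\cap B)$. For $S=\{a_i:i\in J\}$, applying De Morgan's law to the union of these sets yields
\[
N(S)=\bigcup_{i\in J}\bigl(B\setminus(a_i^{-1}A\cap B)\bigr)=B\setminus\bigcap_{i\in J}(a_i^{-1}A\cap B),
\]
so that, using $|B|=n$,
\[
|N(S)|=n-\left|\bigcap_{i\in J}(a_i^{-1}A\cap B)\right|.
\]
I would then invoke Hall's Marriage Theorem, which states that $\Gamma$ has a matching saturating $A$ (equivalently, a perfect matching, as noted above) if and only if $|N(S)|\geq|S|$ for every $S\subseteq A$. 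Substituting the neighborhood formula, this is precisely $n-\bigl|\bigcap_{i\in J}(a_i^{-1}A\cap B)\bigr|\geq|J|$ for every nonempty $J\subseteq\{1,\ldots,n\}$, which rearranges verbatim into the stated cardinality criterion. This single application delivers both implications of the equivalence at once; for the necessity direction one may alternatively give the direct counting argument that the $|J|$ distinct values $\{f(a_i):i\in J\}$ all lie outside the intersection.

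The main obstacle is not depth but care at the interface between the combinatorial setup and Hall's theorem: one must correctly identify the \emph{forbidden} targets for $a_i$ as $a_i^{-1}A\cap B$ rather than its complement, and then use De Morgan's law to convert the union of complements defining $N(S)$ into the complement of the intersection appearing in the statement. Once the neighborhood formula is established, Hall's deficiency condition translates into the intersection bound with no further work, and it is exactly the equality $|B|=n$ that aligns the right-hand side with $n-|J|$. I would stress that Kneser's Theorem is not needed for the equivalence itself; its role is confined to the later sections, where the intersection bound is verified under concrete structural hypotheses on $A$ and $B$.
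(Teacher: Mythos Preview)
Your proof is correct and follows exactly the approach the paper points to: the paper does not prove this lemma in-text but cites \cite{4} and remarks that the result rests on Hall's Marriage Theorem, which is precisely the tool you deploy via the bipartite graph $\Gamma$ and the De Morgan computation of $N(S)$. Your closing remark that Kneser's Theorem plays no role in the equivalence itself (only in later verifications of the criterion) is accurate and sharpens the paper's slightly loose attribution.
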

In addition to works in \cite{24, 14, 2} on classifying matchable sets, Hamidoune in \cite{19} proves the matchability of large classes of sets, in which the concept of Chowla subsets arises.

\begin{theorem}\label{Chowla}
Let $A, B$ be nonempty finite subsets of a group $G$ with $|A|=|B|$. If $B$ is a Chowla subset, then $A$ is matched to $B$.
\end{theorem}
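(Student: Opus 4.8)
The plan is to reduce the existence of a matching to Hall's Marriage Theorem and then to verify Hall's condition using Hamidoune's product-set estimate, Theorem \ref{tx}. First I would build the bipartite graph on vertex classes $A$ and $B$ in which $a\in A$ is joined to $b\in B$ exactly when $ab\notin A$; a perfect matching in this graph is precisely a matching from $A$ to $B$ in the sense of the definition in the Introduction. Writing $n=|A|=|B|$ and, for a subset $S\subseteq A$,
\[
T_S=\{\,b\in B: ab\in A \text{ for all } a\in S\,\},
\]
the set of admissible partners of $S$ is exactly $B\setminus T_S$, so Hall's Theorem tells us that a matching exists if and only if $|T_S|\le n-|S|$ for every nonempty $S\subseteq A$. (This is just the general-group form of the cardinality criterion recorded in Lemma \ref{l*}, obtained directly from Hall rather than from the abelian machinery.)

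The heart of the argument is the inclusion $S\,T_S\subseteq A$, immediate from the definition of $T_S$, whence $S\cup S\,T_S\subseteq A$ and $|S\cup S\,T_S|\le n$. I would apply Theorem \ref{tx} to the pair $(S,T_S)$. Since $T_S\subseteq B$ and $B$ is a Chowla subset, every element of $T_S$ has order at least $n+1$, so the minimal element order satisfies $v(T_S)\ge n+1>n\ge|T_S|$ and hence $\min\{|T_S|,v(T_S)\}=|T_S|$. Granting for the moment that the exceptional equality $S\cup S\,T_S=S\langle T_S\rangle$ fails, Theorem \ref{tx} gives $|S\cup S\,T_S|\ge |S|+|T_S|$; comparing with $|S\cup S\,T_S|\le n$ yields $|T_S|\le n-|S|$, which is exactly Hall's condition.

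The step I expect to be the crux is excluding the exceptional case (and verifying the mild properness hypotheses of Theorem \ref{tx}), and here the Chowla property is decisive. If $T_S$ is empty the bound $|T_S|\le n-|S|$ is trivial because $|S|\le n$, so assume $T_S\neq\emptyset$. Then $\langle T_S\rangle$ contains an element of order at least $n+1$, so $|\langle T_S\rangle|\ge n+1$; as $S\langle T_S\rangle$ is a union of left cosets of $\langle T_S\rangle$ and $S\neq\emptyset$, we get $|S\langle T_S\rangle|\ge|\langle T_S\rangle|\ge n+1>n\ge|S\cup S\,T_S|$, so the two sets cannot coincide. The very same element of large order forces $|G|\ge n+1>n\ge|A|=|B|$, so $A$, $B$, and therefore $S$ and $T_S$, are proper subsets of $G$ and Theorem \ref{tx} is applicable. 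This verifies Hall's condition for every $S$, and the matching from $A$ to $B$ follows.
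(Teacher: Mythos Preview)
The paper does not give its own proof of Theorem \ref{Chowla}; it is quoted as a result of Hamidoune \cite{19}. Your argument is correct, and it is in fact precisely the strategy the paper uses for the closely related Theorem \ref{t1}(6) (see also Remark \ref{r2}): reduce to the Hall/Lemma \ref{l*} criterion $|T_S|\le n-|S|$, then apply Hamidoune's Theorem \ref{tx} to the pair $(S,T_S)$, using the inclusion $S\cup ST_S\subseteq A$ and the order hypothesis on $B$ to rule out the exceptional case $S\cup ST_S=S\langle T_S\rangle$. The only difference between your argument and the paper's proof of part (6) is that the Chowla hypothesis $\mathrm{ord}(b)\ge n+1$ makes $|\langle T_S\rangle|\ge n+1>|A|$, which kills the exceptional case outright; under the weaker hypothesis $\mathrm{ord}(b)\ge n$ of part (6) equality $|a\langle b\rangle|=n$ can occur, and the paper disposes of it with the extra assumption that $A$ is not a progression. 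Your handling of the side conditions (the empty $T_S$ case, properness of $S$ and $T_S$, and the non-abelian Hall reduction) is clean and correct.
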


\subsection{Linear versions of some addition theorems}

Turning our focus to linear adaptations of definitions and results from additive theorems, it is notable that several additive theorems that bound $|AB|$ have been transposed to a linear framework in the following sense. Let $K\subset L$ be a field extension and $A,B$ be $K$-subspaces of $L$. We denote by $\langle AB\rangle$ the $K$-subspace spanned by the product set $AB$ in $L$. One remarkable development in this line of research is the formulation of a linear counterpart to Kneser's addition theorem, as presented in \cite{21}. In this section, we will primarily delve into these bounds and their implications on the structures of $A$ and $B$.

We begin with defining the required notions used in Theorems \ref{t1.8} and \ref{t1.9} starting with the notion of Sidon subspaces.
Following \cite{7}, we say a $K$-subspace $A$ of $L$ is a {\it{Sidon subspace}} if:
\begin{align}\label{eqqq1}
\forall x\in L\setminus K,\ \dim_{K}(A\cap xA)\leq 1.
\end{align}
Note that it is discussed in \cite{7} that \eqref{eqqq1} implies 
\begin{align}\label{eqqq2}
\forall x,y,z,t\in A\setminus\{0\}, \ xy=zt\Rightarrow \{Kx,Ky\}=\{Kz,Kt\}.
\end{align}
In view of \eqref{eqqq2}, the analogy with the Sidon subsets is highlighted. 

We continue borrowing some terminologies from \cite{7} to prepare ourselves to define 1-atoms. Let $A$ be a $K$-subspace of $L$. For every nonzero finite-dimensional $K$-subspace $X$ of $L$, we denote by 
\begin{align*}
\partial_A(X)=\dim_K\langle XA\rangle-\dim_{K} X.
\end{align*}
Let $\psi$ be the set of nonzero finite-dimensional $K$-subspaces $X$ of $L$ such that $\langle XA\rangle\neq L$.  If $\psi$ is nonempty, we define the \textit{1st-connectivity} of $A$ by 
\begin{align*}
\kappa(A)=\min_{X\in\psi}\partial_A(X).
\end{align*}
If $\psi$ is empty, we set $\kappa(A)\neq-\infty$. We define  a \textit{1st-fragment} of $A$ to be an $M\in\psi$ with $\partial_A(M)=\kappa(A)$. A 1st-fragment with minimum dimension is called a {\it{1-atom}}. 
\\

We now state a linear analogue of Kneser's addition Theorem and its variants and consequences. We begin with the following result from \cite{21} in which the separability condition is imposed on our field.

\begin{theorem}\label{tt'}
Let $K\subset L$ be a field extension in which every algebraic element of $L$ is separable over $K$. Let $A,B\subset L$ be nonzero finite dimensional $K$-subspaces of $L$ and $M$ be the stabilizer of $\langle AB\rangle$, i.e., $M=\{x\in L| x\langle AB\rangle\subset \langle AB\rangle\}$. Then 
\begin{align*}
\dim_K\langle AB\rangle\geq \dim_K A+\dim_K B-\dim_K M.
\end{align*}
\end{theorem}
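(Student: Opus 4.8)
The plan is to transpose the proof of Kneser's Theorem (Theorem~\ref{tK}) to the linear setting, replacing the set-theoretic Dyson ($e$-)transform by a subspace transform and inducting on $\dim_K B$. First I would normalize: replacing $A$ by $a_0^{-1}A$ and $B$ by $b_0^{-1}B$ for some nonzero $a_0\in A$, $b_0\in B$ rescales $\langle AB\rangle$ by the unit $(a_0b_0)^{-1}$, so it changes neither $\dim_K\langle AB\rangle$ nor the stabilizer $M$; hence I may assume $1\in A\cap B$. I would also record at the start that $M$ is automatically a field: every $x\in M$ carries the finite-dimensional space $\langle AB\rangle$ into itself and is therefore algebraic over $K$, so $M$ is a finite-dimensional $K$-subalgebra of the field $L$, i.e.\ a finite-dimensional integral domain over $K$, hence a field. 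In particular $K\subseteq M$ and $\dim_K M\geq 1$. The base case $\dim_K B=1$ is then immediate, since for $B=Kb$ we have $\langle AB\rangle=Ab$ and $\dim_K\langle AB\rangle=\dim_K A=\dim_K A+\dim_K B-1\geq\dim_K A+\dim_K B-\dim_K M$.

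The engine of the induction is the linear Dyson transform: for $x\in L^{*}$ put
\[
A(x)=A+xB,\qquad B(x)=B\cap x^{-1}A .
\]
A short computation gives $\dim_K A(x)+\dim_K B(x)=\dim_K A+\dim_K B$, using $\dim_K(A+xB)=\dim_K A+\dim_K B-\dim_K(A\cap xB)$ together with $\dim_K(B\cap x^{-1}A)=\dim_K(A\cap xB)$; and writing a typical product as $(a+xb')\beta=a\beta+b'(x\beta)$ with $\beta\in B$, $x\beta\in A$ shows $\langle A(x)B(x)\rangle\subseteq\langle AB\rangle$. Since $\dim_K B(x)<\dim_K B$ exactly when $xB\not\subseteq A$, whenever such an $x$ is available the transform strictly decreases $\dim_K B(x)$ while preserving the dimension sum, and the inductive hypothesis applies to the pair $(A(x),B(x))$.

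The delicate point --- exactly the one that makes Kneser's original theorem nontrivial --- is that the inductive hypothesis yields the bound with the stabilizer $M(x)$ of the \emph{smaller} space $\langle A(x)B(x)\rangle$, and shrinking a space can only enlarge its stabilizer, so $\dim_K M(x)$ may exceed $\dim_K M$ and the estimate can degrade. Controlling this is the crux, and I would resolve it by the same dichotomy as in the group case. Either $B$ is \emph{$M$-periodic}, meaning $\langle MB\rangle=B$; then, using $\langle AB\rangle=\langle\langle MA\rangle B\rangle$, the pair descends to $M$-coefficients, and the inequality follows from the trivial-stabilizer instance over the base field $M$ applied to $(\langle MA\rangle, B)$, the factor $[M:K]=\dim_K M$ converting the $-1$ deficit over $M$ into the desired $-\dim_K M$ (the step $\dim_K\langle MA\rangle\geq\dim_K A$ absorbing the enlargement of $A$). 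Or no such periodicity holds, and one must exhibit a transform $x$ for which the stabilizer does not grow, so that the induction closes with $M$ itself. Tracking the chain of stabilizers through successive transforms and proving that minimality forces the $M$-periodic alternative is where I expect essentially all the difficulty to lie.

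Finally, the separability hypothesis is indispensable precisely in this stabilizer-control step, and not merely for convenience: in the inseparable case the statement is genuinely false. Separability guarantees that $M$-periodicity is faithfully detected through the transforms and that the relevant $M$-spans carry the expected $K$-dimensions, with no collapse of the form $\langle MC\rangle$ being forced below its predicted dimension by an inseparable multiplicative relation. I would therefore isolate, as the technical heart of the argument, a lemma asserting that under separability an $M$-periodic subspace plays the role that a union of cosets of the stabilizer subgroup plays in Theorem~\ref{tK}; the remainder is then a faithful rerun of Kneser's induction.
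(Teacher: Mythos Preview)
The paper does not prove Theorem~\ref{tt'}; it merely quotes it from \cite{21} (Hou--Leung--Xiang). So there is no ``paper's own proof'' to compare your proposal against, and your write-up must stand on its own. As written it does not: you explicitly defer the decisive step. You correctly identify that the linear Dyson transform $A(x)=A+xB$, $B(x)=B\cap x^{-1}A$ preserves $\dim_K A+\dim_K B$ and shrinks the product span, and you correctly diagnose the obstruction --- the stabilizer $M(x)$ of $\langle A(x)B(x)\rangle$ may strictly contain $M$. But your resolution is only a programme: in the ``periodic'' branch you invoke ``the trivial-stabilizer instance over the base field $M$'', which is precisely the theorem you are trying to prove (now over $M$ instead of $K$), so without a simultaneous induction on, say, $[L:K]$ or $\dim_K\langle AB\rangle$ this is circular; and in the ``non-periodic'' branch you say only that ``one must exhibit a transform $x$ for which the stabilizer does not grow'' and that this ``is where I expect essentially all the difficulty to lie''. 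That is an honest assessment, but it means no proof has been given.

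Two further points. First, your assertion that ``in the inseparable case the statement is genuinely false'' is unsupported and, as far as is known, incorrect: the inequality without the separability hypothesis is a well-known open problem (cf.\ \cite{20}, which establishes it unconditionally for $\dim_K A\le 5$, and \cite{8}), and no counterexample is known. You should not claim falsity where the literature records only a gap in the proofs. Second, the actual argument in \cite{21} does not control stabilizer growth through a chain of Dyson transforms in the way you outline; separability enters there by passing to a Galois closure and exploiting the Galois action, which is a genuinely different mechanism from the one you sketch. If you wish to push the transform approach through, you will need to supply the missing lemma --- that under separability one can always reach a pair with stabilizer exactly $M$ or reduce to the $M$-periodic case in a way that is not circular --- and that lemma is the entire content of the theorem.
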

\begin{remark}\label{rt''}
It is proved in \cite{20} that the separability condition in Theorem \ref{tt'} can be removed in the price of imposing the assumption $\dim_{K}A\leq 5$.
\end{remark}
The separability condition in Theorem \ref{tt'} is removed in the following Theorem from \cite{8} in the sense that two scenarios turn out for the dimension of $\langle AB\rangle$, depending on the stabilizer of one of the subspaces. That is, in contrast to Theorem \ref{tt'} the subfield $M$ seems to depend on both $A$ and $B$, in Theorem \ref{tt} the stabilizer basically may be seen to depend on only one of the subspaces.

\begin{theorem}\label{tt}
Let $K\subset  L$ be a field extension and $A\subset L$ be a $K$-subspace of $L$ of finite positive dimension. Then 
\begin{itemize}
\item
either for every finite-dimensional subspace $B$ of $L$, we have 
\begin{align*}
\dim_K\langle AB\rangle\geq\dim_K A+\dim_K B-1,
\end{align*}
\item
or there exists an intermediate field $K\subset M\subset L$, such that for every finite-dimensional subspace $B$ of $L$ satisfying 
\begin{align*}
\dim_K\langle AB\rangle <  \dim_K A+\dim_K B -1,  
\end{align*}
we have $\langle  AB\rangle M=\langle AB\rangle$.
\end{itemize}
\end{theorem}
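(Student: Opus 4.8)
The plan is to transpose the proof of the group-theoretic Theorem~\ref{ta} to the linear setting, using the isoperimetric (atom) machinery attached to $\partial_A$ and $\kappa(A)$ that is set up above in place of Hamidoune's atoms for groups. First I would record two elementary reductions. For any finite-dimensional $B$ the stabilizer $M_B=\{x\in L\mid x\langle AB\rangle\subseteq\langle AB\rangle\}$ is automatically an intermediate field: it is a $K$-subalgebra of $L$ that embeds into $\mathrm{End}_K(\langle AB\rangle)$, hence has finite dimension, and a finite-dimensional subring of the field $L$ is an integral domain and therefore a field. Consequently the content of the second bullet is not the existence of \emph{some} stabilizer but that one nontrivial field $M\supsetneq K$ can be chosen to stabilize $\langle AB\rangle$ for \emph{all} deficient $B$ at once. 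Second, deficient $B$ with $\langle AB\rangle=L$ are harmless: $L$ stabilizes itself, and any proper field $M\subseteq L$ still satisfies $M\langle AB\rangle=ML=L=\langle AB\rangle$. So if the first bullet fails, I may focus on deficient $B$ with $\langle AB\rangle\neq L$, i.e.\ on $B\in\psi$ with $\partial_A(B)=\dim_K\langle AB\rangle-\dim_K B\leq\dim_K A-2$; the remaining degenerate possibility, that every deficient $B$ has $\langle AB\rangle=L$, is dispatched by taking $M=L$.

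In this language the first alternative is precisely $\kappa(A)\geq\dim_K A-1$, so assume instead $\kappa(A)\leq\dim_K A-2$ and let $M$ be a $1$-atom of $A$ normalized to contain $1$. The heart of the argument is to prove that this $M$ is a nontrivial intermediate field $K\subsetneq M\subsetneq L$ and that it serves uniformly. The relevant structural tools are the (unconditional) submodularity of the boundary, $\partial_A(\langle X,Y\rangle)+\partial_A(X\cap Y)\leq\partial_A(X)+\partial_A(Y)$, which follows at once from $\langle(X+Y)A\rangle=\langle XA\rangle+\langle YA\rangle$ and $\langle(X\cap Y)A\rangle\subseteq\langle XA\rangle\cap\langle YA\rangle$; the clean product identity $\langle A(B_1+B_2)\rangle=\langle AB_1\rangle+\langle AB_2\rangle$, valid because subspaces contain $0$; and the intersection property of atoms, namely that two $1$-atoms either coincide or meet in a proper subspace. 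Minimality of $\dim_K M$ together with submodularity forces the translates $uM$ ($u\in M\setminus\{0\}$) to coincide with $M$, so $M$ is closed under multiplication and hence a field, and $\kappa(A)\leq\dim_K A-2$ gives $M\neq K$.

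Granting that $M$ is a nontrivial intermediate field, the last step is to show $M\langle AB\rangle=\langle AB\rangle$ for every deficient $B$. The product identity shows that the family of subspaces whose $A$-product is $M$-stable is closed under sums, and the atom intersection property propagates $M$-periodicity from the atom to every $1$-fragment; applying submodularity to $M$ and a deficient $B$ and invoking minimality of $\partial_A(M)=\kappa(A)$ then forces $M\subseteq M_B$, i.e.\ $M$ stabilizes $\langle AB\rangle$. Since $M$ is manufactured from $A$ alone, it is independent of $B$, which is exactly the second alternative.

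The main obstacle is the separability-free proof that the atom $M$ is a nontrivial field and that a single $M$ works for every deficient $B$. This is precisely the point at which the quantitative linear Kneser bound of Theorem~\ref{tt'} required that every algebraic element of $L$ be separable over $K$ (or the degree bound $\dim_K A\leq 5$ of Remark~\ref{rt''}): in the purely inseparable regime the naive stabilizer bound can degrade, so one cannot simply read off a nontrivial stabilizer for an individual deficient $B$. The advance encapsulated in Theorem~\ref{tt} (over Theorem~\ref{tt'}) is to shift the dependence of the stabilizer from the pair $(A,B)$ to $A$ alone, exactly as in the group statement Theorem~\ref{ta}; carrying this out without separability is the delicate work of \cite{8}, and it is where I expect the genuine difficulty to lie.
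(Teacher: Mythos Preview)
The paper does not prove this theorem: it is quoted from \cite{8} in Section~2.2 as one of the linear addition theorems to be used later (in the proofs of Theorem~\ref{t1.8}, parts~(2) and~(3)), so there is no proof in the paper against which to compare your attempt.

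Your outline is, however, an accurate summary of the isoperimetric approach of \cite{8}: identify the first alternative with $\kappa(A)\geq\dim_K A-1$, take a $1$-atom $M$ of $A$ containing $1$, show via translation invariance and the atom-intersection property that $M$ is a nontrivial intermediate field, and then propagate $M$-periodicity to every deficient $\langle AB\rangle$. You correctly flag that the delicate step---proving the atom is a field without the separability hypothesis of Theorem~\ref{tt'}---is exactly where \cite{8} does its real work, and your sketch does not fill this in; what you have written is a faithful roadmap rather than a proof. One technical caution: the submodularity inequality $\partial_A(X+Y)+\partial_A(X\cap Y)\leq\partial_A(X)+\partial_A(Y)$ as you state it requires $X+Y\in\psi$ (i.e.\ $\langle(X+Y)A\rangle\neq L$); when $L$ is finite-dimensional over $K$ this can fail, and the argument in \cite{8} treats these boundary cases with some care.
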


The following theorem from \cite{7} is not as powerful as the linear analogue of Kneser's Theorem but it is useful in classifying unmatchable subspaces with respect to the structural features of subspaces addressed in Theorem \ref{t1.9}.

\begin{theorem}\label{tw}
Let $K\subset L$ be a field extension.  Let $S$ be a $K$-subspace of $L$ containing $1$.
Let $A$ be the 1-atom of $S$. If there exists a $K$-subspace $T$ of $L$ for which 
\[\dim_{K}\langle ST\rangle<\dim_{K} S+\dim_{K}T-1<\dim_{K} L,\]
then $A$ is a subfield of $L$ properly containing $K$.
\end{theorem}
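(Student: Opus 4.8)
The plan is to build the calculus of fragments and $1$-atoms for the operator $\partial_S$ and to show that the $1$-atom, once normalised to contain $1$, is closed under multiplication and inversion. First I would extract the consequences of the hypotheses. Since $\dim_K\langle ST\rangle<\dim_KS+\dim_KT-1<\dim_KL$ and $\dim_KT\geq1$, we get $\dim_KS<\dim_KL$ and $\langle ST\rangle=\langle TS\rangle\neq L$, so $T\in\psi$ and hence $\psi\neq\emptyset$; thus $\kappa(S)$ and the $1$-atoms exist. Moreover $\partial_S(T)=\dim_K\langle ST\rangle-\dim_KT<\dim_KS-1$, which forces $\kappa(S)\leq\dim_KS-2$. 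Because $1\in S$ we have $X\subseteq\langle XS\rangle$ for every $X$, whence $\partial_S(X)\geq0$ and in particular $\kappa(S)\geq0$. Let $A$ be a $1$-atom, so that $\partial_S(A)=\kappa(S)$ with $A$ of least dimension having this property.

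Two structural tools drive the argument. The first is \emph{scaling invariance}: for $x\in L^{*}$, multiplication by $x$ is a $K$-linear bijection, so $\dim_K(xX)=\dim_KX$ and $\langle(xX)S\rangle=x\langle XS\rangle$; hence $\partial_S(xX)=\partial_S(X)$ and $xX\in\psi$ iff $X\in\psi$. Thus scaling carries fragments to fragments and $1$-atoms to $1$-atoms of the same dimension, and by replacing $A$ with $a^{-1}A$ for some $0\neq a\in A$ I may assume $1\in A$, so that $K=K\cdot1\subseteq A$. The second tool is \emph{submodularity}: for nonzero $K$-subspaces $X,Y$ with $X\cap Y\neq\{0\}$, the identities $\langle(X+Y)S\rangle=\langle XS\rangle+\langle YS\rangle$ and $\langle(X\cap Y)S\rangle\subseteq\langle XS\rangle\cap\langle YS\rangle$, combined with the modular law for dimensions, give
\[
\partial_S(X+Y)+\partial_S(X\cap Y)\leq\partial_S(X)+\partial_S(Y).
\]

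The heart of the proof is the intersection property of atoms: if $A$ is a $1$-atom and $F$ is a $1$st-fragment with $A\cap F\neq\{0\}$, then $A\subseteq F$. Assuming $A\not\subseteq F$, so $\dim_K(A\cap F)<\dim_KA$, submodularity yields $\partial_S(A+F)+\partial_S(A\cap F)\leq2\kappa(S)$. Since $\langle(A\cap F)S\rangle\subseteq\langle AS\rangle\neq L$ we have $A\cap F\in\psi$, so $\partial_S(A\cap F)\geq\kappa(S)$; if moreover $\langle(A+F)S\rangle\neq L$ then $A+F\in\psi$, which forces $\partial_S(A\cap F)=\kappa(S)$ and exhibits $A\cap F$ as a fragment of dimension strictly below that of the atom $A$, a contradiction. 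Applied to $F$ a second $1$-atom, this shows that two $1$-atoms meeting nontrivially coincide; hence the normalised $A$ is the \emph{unique} $1$-atom containing $1$.

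The main obstacle is the remaining case $\langle(A+F)S\rangle=L$. This never arises when $\dim_KL=\infty$, since then $\langle\,\cdot\,S\rangle$ of any finite-dimensional argument is a proper subspace; but when $L/K$ is finite-dimensional a dimension count shows it can occur once $\dim_KA$ is comparable to $\dim_KL-\kappa(S)$. I would resolve it by the duality of fragments from \cite{7}: a fragment $F$ with $\langle FS\rangle\neq L$ admits a complementary fragment $F'$ with $\partial_S(F')=\partial_S(F)$ and $\langle F'S\rangle\neq L$ (arising from the annihilator of $\langle FS\rangle$ under a fixed nondegenerate pairing), so passing to $F'$ reduces this case to the one already handled. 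Granting the intersection property, the conclusion follows: for every $0\neq a\in A$ the space $a^{-1}A$ is a $1$-atom containing $a^{-1}a=1$, so by uniqueness $a^{-1}A=A$; applying this with $a$ and then with $a^{-1}\in A$ gives $A^{-1}\subseteq A$ and $A\cdot A\subseteq A$, so the finite-dimensional $K$-subalgebra $A\ni1$ is a field. Finally $A\neq K$, for if $A=K$ then $\langle AS\rangle=S$ and $\partial_S(A)=\dim_KS-1>\dim_KS-2\geq\kappa(S)$, contradicting $\partial_S(A)=\kappa(S)$. Hence $A$ is an intermediate field with $K\subsetneq A\subseteq L$, as claimed.
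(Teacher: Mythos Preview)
The paper does not give its own proof of this statement: Theorem~\ref{tw} is quoted from Bachoc--Serra--Z\'emor \cite{7} in the preliminaries section and is used as a black box in the proof of Theorem~\ref{t1.9}. So there is no in-paper proof to compare your proposal against; effectively you are sketching the argument from \cite{7}.

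As such, your outline is the correct one and follows the isoperimetric method of \cite{7}: you extract $0\le\kappa(S)\le\dim_KS-2$ from the hypothesis, establish scaling invariance and submodularity of $\partial_S$, prove the intersection property of atoms, deduce uniqueness of the $1$-atom containing $1$, and conclude that this atom is a subfield strictly containing $K$. Two points deserve comment. First, you replace the given $A$ by $a^{-1}A$; this is fine, but you should make explicit that the phrase ``the $1$-atom'' in the statement is to be read as ``the (unique) $1$-atom containing $1$'', since an arbitrary $1$-atom need not contain $1$ and hence cannot literally be a subfield. Second, and more substantively, the case $\langle(A+F)S\rangle=L$ is exactly where the real work lies, and your treatment (``passing to $F'$ reduces this case to the one already handled'') is too compressed to stand on its own. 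In \cite{7} one fixes a nondegenerate $K$-bilinear form on $L$, defines the dual $F^{*}$ of a fragment $F$ as (a translate of) the annihilator of $\langle FS\rangle$, checks that $F^{*}$ is again a fragment with $\partial_S(F^{*})=\partial_S(F)$, and then uses a dimension count comparing $\dim_KA$ with $\dim_KF$ and $\dim_KF^{*}$ to guarantee that at least one of the two submodularity applications (with $F$ or with $F^{*}$) avoids the obstruction $\langle(\,\cdot\,)S\rangle=L$. Your sentence gestures at this but does not supply the count; if you intend a self-contained proof rather than a citation, that step needs to be written out.
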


As discussed earlier in Section 1, we addressed a dimension criterion proposed by Eliahou-Lecouvey \cite{15} to characterize matchable bases. This criterion serves as a fundamental cornerstone in our proofs for theorems relating to matchable and unmatchable subspaces. In light of its significance in our work, we present it here for reference.

\begin{theorem}\label{tF}
Let $K\subset L$ be a field extension and $A, B$ be $n$-dimensional $K$-subspaces of $L$. Let $\mathcal{A}=\{a_1,\ldots,a_n\}$ be a basis of $A$. Then $\mathcal{A}$ can be matched to a basis of $B$ if and only if for all $J\subset\{1,\ldots,n\}$, we have
\[\dim_{K}\bigcap_{i\in J}\left(a_i^{-1} A\cap B\right)\leq n-|J|.\]
\end{theorem}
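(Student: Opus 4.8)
The plan is to prove Theorem \ref{tF} by recognizing it as an instance of a transversal/system-of-distinct-representatives problem and invoking Rado's Theorem, the matroid-theoretic generalization of Hall's Marriage Theorem. The underlying linear structure is as follows. We work inside the finite-dimensional $K$-space $B$, which carries the matroid structure given by $K$-linear independence. For each index $i\in\{1,\ldots,n\}$ we associate the $K$-subspace
\[
W_i=a_i^{-1}A\cap B\subset B.
\]
Recall from the definition of matched bases that $\mathcal{A}=\{a_1,\ldots,a_n\}$ is matched to a basis $\mathcal{B}=\{b_1,\ldots,b_n\}$ of $B$ precisely when, for each $i$, we have $W_i\subset\langle b_1,\ldots,b_{i-1},b_{i+1},\ldots,b_n\rangle$; equivalently, $b_i\notin W_i+\langle\{b_j:j\neq i\}\rangle$, which (since $\mathcal B$ is a basis) says that the image of $b_i$ is nonzero in the quotient $B/W_i$. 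Thus the existence of a matching basis is equivalent to the existence of a basis $b_1,\ldots,b_n$ of $B$ whose $i$-th vector survives in $B/W_i$.

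The key reformulation I would carry out is to transfer this into a transversal condition. First I would observe that, by passing to quotients, a basis $(b_i)$ with $b_i\notin W_i$ modulo the others exists if and only if one can choose, for each $i$, a vector $c_i\in B\setminus W_i$ such that $\{c_1,\ldots,c_n\}$ is linearly independent (hence a basis). Concretely, set up the bipartite-type selection where the ground matroid is $B$ with linear independence, and the $i$-th ``allowed'' set is the complement direction $B/W_i$; selecting an independent transversal is exactly a Rado-type independent system of representatives. Rado's Theorem then states that such an independent transversal exists if and only if for every subset $J\subset\{1,\ldots,n\}$ the rank of the union of the allowed sets over $J$ is at least $|J|$. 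After translating the rank condition through the quotients, the union rank over $J$ being $\geq |J|$ becomes, after taking complements/annihilators, exactly the inequality
\[
\dim_K\bigcap_{i\in J}W_i=\dim_K\bigcap_{i\in J}\left(a_i^{-1}A\cap B\right)\leq n-|J|.
\]

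The two directions should then fall out. For necessity, if $\mathcal{A}$ is matched to some basis $\mathcal{B}$, the defining containment $W_i\subset\langle\{b_j:j\neq i\}\rangle$ for all $i\in J$ forces $\bigcap_{i\in J}W_i\subset\bigcap_{i\in J}\langle\{b_j:j\neq i\}\rangle=\langle\{b_j:j\notin J\}\rangle$, which has dimension $n-|J|$; this gives the inequality directly and requires no deep input. For sufficiency, I would feed the family of inequalities into Rado's Theorem to produce the independent transversal, and then unwind the construction to build the matching basis $\mathcal{B}$. I expect the main obstacle to be the careful bookkeeping in the quotient/annihilator translation: phrasing the ``$b_i$ survives modulo $W_i$'' condition as a clean matroid-transversal hypothesis, and verifying that Rado's rank condition over every $J$ corresponds term-by-term to the stated intersection-dimension bound rather than to some shifted or dualized version of it. Once that dictionary between the Hall/Rado rank inequalities and the intersection dimensions $\dim_K\bigcap_{i\in J}(a_i^{-1}A\cap B)$ is established, the equivalence is immediate.
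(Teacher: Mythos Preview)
The paper does not prove Theorem~\ref{tF}; it is quoted from Eliahou--Lecouvey \cite{15}, where it is indeed established via Rado's Theorem, so your overall strategy is aligned with the original source. Your necessity argument is also correct as written.

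However, there is a genuine gap in your sufficiency direction. The claimed equivalence between ``$b_i\notin W_i+\langle\{b_j:j\neq i\}\rangle$'' and ``the image of $b_i$ is nonzero in $B/W_i$'' is false: the latter is merely $b_i\notin W_i$, which is strictly weaker. Consequently, the reformulation ``a matching basis exists iff one can choose linearly independent $c_i\in B\setminus W_i$'' is also false. For a counterexample take $n=2$, $B=K^2$, and $W_1=W_2=\langle e_1\rangle$. Then $c_1=e_2$ and $c_2=e_1+e_2$ are linearly independent and lie outside $W_1=W_2$, yet no matching basis exists: the requirement $W_i\subset\langle b_{3-i}\rangle$ forces both $b_1$ and $b_2$ into the line $\langle e_1\rangle$. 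Correspondingly, the dimension criterion fails since $\dim(W_1\cap W_2)=1>0=n-|J|$ for $J=\{1,2\}$. If you feed $A_i=B\setminus W_i$ into Rado directly, the rank of $\bigcup_{i\in J}(B\setminus W_i)=B\setminus\bigcap_{i\in J}W_i$ is $n$ whenever the intersection is proper, so Rado's condition becomes vacuous and does not recover the stated inequality.

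The fix is to dualize \emph{before} invoking Rado. The matching condition $W_i\subset\langle\{b_j:j\neq i\}\rangle$ says precisely that the $i$-th dual basis vector $\phi_i\in B^*$ annihilates $W_i$, i.e.\ $\phi_i\in W_i^{\perp}$. Thus one seeks a basis $\{\phi_1,\ldots,\phi_n\}$ of $B^*$ with $\phi_i\in W_i^{\perp}$, and Rado's Theorem applies to the family of subspaces $W_i^{\perp}\subset B^*$. Since $\sum_{i\in J}W_i^{\perp}=\bigl(\bigcap_{i\in J}W_i\bigr)^{\perp}$, Rado's rank condition $\dim\sum_{i\in J}W_i^{\perp}\geq|J|$ becomes $n-\dim\bigcap_{i\in J}W_i\geq|J|$, which is exactly the stated inequality. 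Passing back from the dual basis $(\phi_i)$ to its predual basis $(b_i)$ then yields the matching basis.
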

\subsection{Linear matchings in primitive subspaces}
Primitive subspaces are initially introduced in \cite{3} as a linear counterpart to a set of generating elements in a given cyclic group. Given a separable field extension $K\subset L$, a $K$-subspace $A$ of $L$ is called a {\it{primitive subspace}} if $K(a)=L$ for every $a\in A\setminus \{0\}$. Primitive subspaces have been studied in two aspects. First, such subspaces seem interesting in their own right. For example, in \cite{1,4} the size of the largest primitive subspace in a separable field extension is investigated and determined. Second, when dealing with a primitive subspace  $B$, it is proved in \cite{3} that every subspace $A$ with $\dim_{K}A=\dim_{K} B$ is matched to $B$. Theorem \ref{t1.8} part (1) generalizes this result. Moreover, in Section 5, we define Chowla subspaces that are a wider class of vector spaces and speculate on the notion of matchability when replacing primitive subspaces with Chowla subspaces.

\section{Proofs on matching in abelian groups}

\begin{proof}[Proof of Theorem \ref{t1}]
For any subset $S$ of $A$, let $V_S:=\{b\in B\mid Sb\subset A\}$ and $W_S:=V_S\cup\{e\}$. We will first prove that each of conditions (1) through (7) implies $|SW_S|\geq|S|+|W_S|-1$. 

\textbf{Part (1):} Since $B\cap H=\emptyset$, for every proper subgroup $H$ of $G$, we conclude that $W_S\cap H=\{e\}$. Applying Theorem \ref{t*} to $S$ and $W_S$ we have
\begin{align}\label{eq1}
|SW_S|\geq\min\{|G|,|S|+|W_S|-1\}.
\end{align}
On the other hand,
\begin{align*}
&|SW_S|\leq|A|=|B|<|G|,
\end{align*}
since $SW_S\subset A$ and $B\subsetneq G$.

This along with \eqref{eq1} implies that $|SW_S|\geq|S|+|W_S|-1$.

\textbf{Part (2):} Applying Theorem \ref{t**} to $S$ and $W_S$, we have
\begin{align}\label{eq2}
|SW_S|\geq\min\left\{|G|,\left(\frac{l}{l+1}+\frac{1}{2(l+1)}\right)(|S|+|W_S|)\right\}.
\end{align}

In a similar manner as in part (1), we may argue that $|SW_S|<|G|$. Thus
\begin{align}\label{eq3}
|SW_S|\geq \left(\frac{l}{l+1}+\frac{1}{2(l+1)}\right)(|S|+|W_S|).
\end{align}
Clearly $|S|+|W_S|\leq 2n+1$. We claim that $|S|+|W_S|\neq 2n+1$, as otherwise, $|S|=n$ and $|V_S|=n$, implying $S=A$ and $V_S=B$. Thus $B‌=\{b\in B|Ab\subset A\}$ entailing $AB=B$. If follows from Proposition \ref{p***} that $B$ is a subgroup of $G$, contradicting $e\notin B$. So $|S|+|W_S|\leq 2n$. We have:
\begin{align*}
2l+2\geq 2n\geq|S|+|W_S|.
\end{align*}
Therefore, 
\begin{align*}
1\geq&\frac{1}{2l+2}\left(|S|+|W_S|\right)\\
=&\left(1-\frac{l}{l+1}-\frac{1}{2(l+1)}\right)(|S|+|W_S|).
\end{align*}
Hence,
\begin{align}\label{eq4}
\left(1-\frac{l}{l+1}-\frac{1}{2(l+1)}\right)(|S|+|W_S|)\geq|S|+|W_S|-1.
\end{align}
Combining \eqref{eq3} and \eqref{eq4}, we have $|SW_S|\geq|S|+|W_S|-1$, as claimed.

\textbf{Part (3):} Applying Theorem \ref{t**} to $S$ and $W_S$, along with the argument used in part (2), we obtain that
\begin{align}\label{eq5}
|SW_S|\geq \left(1-\frac{l}{l+1}-\frac{1}{2(l+1)}\right)(|S|+|W_S|).
\end{align}
We claim that $|S|+|W_S|\leq 2n-1$. Clearly, $|S|+|W_S|\leq 2n+1$. Additionally, in a similar manner as in part (2),  we may argue that $|S|+|W_S|\neq 2n+1$. So it is left to show $|S|+|W_S|\neq 2n$. Assume to the contrary, $|S|+|W_S|=2n$. This means $|S|+|V_S|=2n-1$. We split into two cases.

Case 1) $|S|=n$ and $|V_S|=n-1$. So $S=A$ and $V_S=V_A$ with $|V_A|=n-1$. Let $B=\{b_1,\ldots,b_n\}$ and without loss of generality, assume $V_S=\{b_1,\ldots,b_{n-1}\}$. Thus, $Ab_i\subset A$, for every $1\leq i\leq n-1$. Since $|Ab_i|=|A|$, we conclude that $Ab_i=A$, for $1\leq i\leq n-1$. Therefore,
\begin{align*}
\prod_{a\in A} a=\prod_{a\in A} ab_1=\prod_{a\in A}ab_2=\cdots=\prod_{a\in A}ab_{n-1}.
\end{align*}
Thus, $a^n=a^nb_i^n$ entailing $b_i^n=e$, $1\leq i\leq n-1$. This contradicts  $B$ having at least two elements with orders greater than $n$.

Case 2) $|S|=n-1$ and $|V_S|=n$. Let $A=\{a_1,\ldots,a_n\}$. Without loss of generality, assume that $S=\{a_1,\ldots,a_{n-1}\}$. Clearly $V_S=B$. Thus, $SB\subset A$. So $a_i B\subset A$, for all $1\leq i\leq n-1$. Since $|a_iB|=|B|=|A|$, hence  $a_iB=A$, for all $1\leq i\leq n-1$. Therefore,
\begin{align*}
\prod_{j=1}^n  a_j=\prod_{j=1}^n a_1b_j=\prod_{j=1}a_2b_j=\cdots=\prod_{j=1}^n a_{n-1}b_j,
\end{align*}
entailing $a_i^n=a_j^n$, for all $1\leq i,j \leq n-1$. This contradicts $|\{a^n|a\in A\}|>2$.

So in both cases, we extract contradictions, and then $|S|+|W_S|<2n-1$. We have 
\begin{align*}
2l+2\geq 2n-2\geq|S|+|W_S|,
\end{align*}
implying
\begin{align*}
1\geq \frac{1}{2l+2}(|S|+|W_S|).
\end{align*}
In a similar manner as in part (2), one may conclude that 
\begin{align}\label{eq6}
\left(\frac{l}{l+1}+\frac{1}{2(l+1)}\right)(|S|+|W_S|)\geq|S|+|W_S|-1.
\end{align}
Combining \eqref{eq5} and \eqref{eq6}, we obtain $|SW_S|\geq|S|+|W_S|-1$, as claimed. 

\textbf{Part (4):} Applying Theorem \ref{ta} to $S$ and $W_S$, we either have $|SW_S|\geq|S|+|W_S|-1$ or $|SW_S|<|S|+|W_S|-1$, and  there exist a subgroup $H\neq\{e\}$ of $G$ with $SW_SH=SW_S$. Suppose that the latter case satisfies. Then we have:
\begin{align}\label{eq7}
|H|\leq|SW_SH|=|SW_S|\leq|A|=p(G).
\end{align}
It follows from \eqref{eq7} that $|H|=p(G)$. Therefore $|SW_SH|=|A|$. On the other hand $SW_SH=SW_S\subset A$. Thus $SW_SH=A$. Choose $x\in SW_S$, then  $xH\subset A$ and since $|xH|=|H|=p(G)=A$, then $xH=A$, contradicting $A$ not being a coset of any nontrivial subgroup of $G$. Therefore $|SW_S|\geq |S|+|W_S|-1$, as claimed. 

\textbf{Part (5):} Since $SW_S\subset A$ and $A$ is a Sidon subset, applying Corollary \ref{cg} to $S$ and $W_S$ one may have $|SW_S|\geq|S|+|W_S|-1$. 

\textbf{Part (6):} We are going to apply Theorem \ref{tx} to $S$ and $V_S$.

We show that  $S\cup SV_S\neq S\langle V_S\rangle$; otherwise since $SV_S\cup S\subset A$, we would conclude that $S\langle V_S\rangle\subset A$. Therefore, if $a\in S$ and $b\in V_S$, $a\langle b\rangle\subset A$. Since $o(b)\geq n$, then $|a\langle b\rangle|\geq|A|$, implying $A=a\langle b\rangle$ and $o(b)=n$. Thus $A=\{a,ab,\ldots,ab^{n-1}\}$, contradicting $A$ not being a progression. Now applying Theorem \ref{tx} to $S$ and $V_S$, we have:
\begin{align*}
|SW_S|=&|S(V_S\cup \{e\})|=|SV_S\cup S|\\
\geq&|S|+\min\{|V_S|,o(b);\  b\in V_S\}=|S|+|V_S|\\
=&|S|+|W_S|-1,
\end{align*}
as desired.

\textbf{Part (7):} 
Let $B_0=B\cup\{e\}$. Then we have:
\[|aH\cap A|+|bH\cap B_0|\leq|H|+1,\]
for every nontrivial finite subgroup $H$ of $G$ and $a,b\in G$. Applying Proposition \ref{p2.3} to $A$ and $B_0$, we conclude that $|SW_S|\geq|S|+|W_S|-1$, as $S\subset A$ and $W_S\subset B_0$.
\\

Therefore in all parts (1)-(7), we established that $|SW_S|\geq|S|+|W_S|-1$, which implies that $|V_S|\leq n-|S|$. On the other hand, $V_S=\underset{a_i\in S}{\bigcap}\left(a_i^{-1}A\cap B\right)$, for every $S\subset A$. Therefore $\left|\underset{a_i\in S}{\bigcap}(a_i^{-1}A\cap B)\right|\leq n-|S|$. Invoking Lemma \ref{l*}, we conclude that $A$ is matched $B$.
\end{proof}

\begin{corollary}\label{c1}
Let $A,B\subset \mathbb{Z}_n$ where $|A|=|B|$ and gcd$(b,n)=1$, for every $b\in B$. Then $A$ is matched to $B$.
\end{corollary}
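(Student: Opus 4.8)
The plan is to deduce Corollary~\ref{c1} directly from Theorem~\ref{t1}, specifically from part~(1), by checking that the hypothesis $\gcd(b,n)=1$ for every $b\in B$ forces $B$ to avoid every proper subgroup of $\mathbb{Z}_n$. First I would recall that in $\mathbb{Z}_n$ every subgroup is cyclic of the form $H_d=\langle d\rangle$ where $d\mid n$, and its nonidentity elements are precisely those residues sharing the divisor $d$ with $n$; more usefully, an element $b$ lies in some proper nontrivial subgroup if and only if its order is a proper divisor of $n$, equivalently if and only if $\gcd(b,n)>1$. I am writing $\mathbb{Z}_n$ additively here, so the neutral element is $0$ and the condition $e\notin B$ becomes $0\notin B$, which is automatic since $\gcd(0,n)=n\neq 1$ excludes $0$ from $B$ under our hypothesis.

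The key verification is the following: fix any proper subgroup $H\subsetneq\mathbb{Z}_n$ and suppose toward a contradiction that some $b\in B$ lies in $H\setminus\{0\}$. Since $|H|$ is a proper divisor $m$ of $n$ with $m<n$, every element of $H$ is annihilated by $m$, so the additive order of $b$ divides $m$ and is therefore a proper divisor of $n$. But the order of $b$ in $\mathbb{Z}_n$ equals $n/\gcd(b,n)$, so $\gcd(b,n)>1$, contradicting the standing assumption $\gcd(b,n)=1$. (The element $0$ is excluded from $B$ for the same reason, so this argument covers all of $B$.) Hence $B\cap H=\{0\}$ for every proper subgroup $H$, and in fact $B\cap H\subseteq\{0\}$; since $0\notin B$ we get $B\cap H=\emptyset$ for every \emph{proper} subgroup, which is exactly hypothesis~(1) of Theorem~\ref{t1} (trivial and proper finite subgroups cause no issue because $0\notin B$ handles the only overlap).

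With hypothesis~(1) verified and $|A|=|B|=:n'$ (the common cardinality, which I would name separately from the modulus $n$ to avoid the clash of notation), Theorem~\ref{t1} applies verbatim and yields that $A$ is matched to $B$, completing the proof. I do not anticipate a genuine obstacle here: the corollary is essentially a dictionary entry translating the group-theoretic avoidance condition into the elementary number-theoretic condition $\gcd(b,n)=1$, and the only point requiring care is the purely bookkeeping clash between the modulus $n$ of $\mathbb{Z}_n$ and the cardinality $n$ appearing in the statement of Theorem~\ref{t1}; I would resolve this by renaming one of them so that the invocation of Theorem~\ref{t1} is unambiguous. A secondary point worth stating explicitly is that coprimality to $n$ is exactly the characterization of generators of $\mathbb{Z}_n$, so each $b\in B$ generates the whole group and thus cannot sit inside any proper subgroup — an equivalent one-line phrasing of the verification above.
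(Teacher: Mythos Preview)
Your proposal is correct and follows essentially the same route as the paper: verify that $\gcd(b,n)=1$ makes each $b\in B$ a generator of $\mathbb{Z}_n$, hence $B\cap H=\emptyset$ for every proper subgroup $H$ (and $0\notin B$), and then invoke Theorem~\ref{t1} part~(1). Your remark about the notational clash between the modulus $n$ and the cardinality $n$ in Theorem~\ref{t1} is a fair piece of bookkeeping but does not affect the argument.
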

\begin{proof}
Since gcd$(b,n)=1$ for every $b\in B$, it follows that $B\cap H=\emptyset$, for every proper subgroup $H$ of $\mathbb{Z}_n$ (indeed the condition   gcd$(b,n)=1$ implies that every $b\in B$ is a generating element of $\mathbb{Z}_n$. So if $a\in B\cap H=\emptyset$, we may conclude that $\mathbb{Z}_n=<b>\subset H\subset G$, contradicting $H$ being a proper subgroup of $\mathbb{Z}_n$.) Invoking Theorem \ref{t1} part (1), we conclude that $A$ is matched to $B$.
\end{proof}
\begin{remark}\label{r1}
Theorem \ref{t1} pat (1) is a strengthening of a result from \cite{24}, establishing that all abelian torsion-free groups and cyclic groups of prime orders possess the matching property.
\end{remark}
\begin{remark}
    Theorem \ref{t1} part(2) is a strengthening of Theorem \ref{tf}. Furthermore, a theorem by Losonczy \cite{23} asserts that all abelian torsion-free groups and all cyclic groups of prime order possess the matching property. This can be regarded as a consequence of Theorem \ref{t1} part (2).
\end{remark}

\begin{remark}\label{r2}
Theorem \ref{t1}, part (6) represents a modest strengthening of Theorem \ref{Chowla} wherein the condition ord$(x)\geq|B|+1$, for every $x\in B$ is replaced with ord$(x)\geq|B|$, in the price of imposing the extra assumption of $A$ not being a progression.
\end{remark}
\begin{example}\label{e1}
Consider the subsets $A=\{5,6,7\}$ and $B=\{1,2,3\}$ of $\mathbb{Z}_{15}$. $\mathbb{Z}_{15}$ has two nontrivial proper subgroups:
$H=\{0,5,10\}$ and $K=\{0,3,6,9,12\}$. Upon examination, it becomes evident that neither $A$ nor $B$ can be written as a 2-union of cosets of $H$ (and $K$). By Theorem \ref{t1} part (2), $A$ is matched to $B$.
\end{example}
\begin{example}\label{e2}
Consider the subsets $A=\{1,2,3,9,14\}$ and $B=\{4,5,6,10,13\}$. It can be verified that both $A$ and $B$ satisfy the conditions of Theorem \ref{t1}, part (3), with $l=3$ and $n=5$. Consequently, $A$ is matched to $B$.
\end{example}
\begin{example}\label{e3}
Let $A$ be a maximal Sidon subset of $\mathbb{Z}_{2^m}$. Invoking Proposition 2.8 in \cite{12*}, if $m=7$, then $|A|=12$. This is attained if $A=\{0,1,2,4,8,16,32,64,15,60,101,87\}$. It follows from Theorem \ref{t1}, part (5) that $A$ is matched to every subset of $\mathbb{Z}_{128}$ with $|B|=12$ and $0\notin B$. 
\end{example}
\begin{example}\label{e4}
Consider $A=\{1,8,15,22,30\}\subset \mathbb{Z}_{35}$. Clearly $|A|=5=p(\mathbb{Z}_{35})$ and $A$ is not a coset of the subgroup $K=\{0,7,14,21,28\}$ of $\mathbb{Z}_{35}$. Following Theorem \ref{t1} part (4), $A$ is matched to every subset $B$ of $\mathbb{Z}_{35}$ with $|B|=5$ and $0\notin B$. 
\end{example}

\begin{example}
    Consider the subsets $A=\{0,4,8\}$ and $B=\{3, 6, 8\}$ of $\mathbb{Z}_{9}$. Then for every $a,b \in \mathbb{Z}_{9}$, $|(a+H)\cap A|+|(b+H)\cap B|<4$, where $H=\{0,3,6\}$ is the only proper subgroup of $\mathbb{Z}_{9}$. It follows from Theorem \ref{t1} part (7) that $A$ is matched to $B$.
\end{example}
\subsection{Matchings and coset representatives in abelian groups}
Let $G$ be a finite abelian group and $H$ a subgroup of $G$. If $g_1,…g_m$ are elements such that the cosets $H_{g_1},…H_{g_m}$ are distinct and cover the entirety of $G$, we call these elements {\it{coset representatives.}} Every nonempty subset of a closet representative is called a {\it{semicoset representative}}. Let $A$ and $B$ be two subsets of $\mathbb{Z}_{p^2}$, where $p$ is prime, $|A|=|B|=n\leq p$ and $0\notin B$.Suppose both $A$ and $B$ are coset representatives or semicoset representatives.Then $A$ and $B$ are not contained in the union of $n-1$ cosets of $H$, where $H$ is the subgroup of $\mathbb{Z}_{p^2}$ with $|H|=p$. By Theorem \ref{t1} part (2), $A$ must be matched to $B$.

\begin{proof}[Proof of Theorem \ref{t2}]
Let $A=\{a_1,\ldots,a_n\}$. Since $A$ is not matched to $B$, by Lemma \ref{l*}, there must exist $J\subset \{1,\ldots,n\}$ such that 
\begin{align}\label{eq8}
\left|\bigcap_{i\in J}(a_i^{-1}A\cap B)\right|>n-|J|.
\end{align}
Set $S=\{a_i|i\in J\}$, $V=\underset{i\in J}{\bigcap}(a_i^{-1}A\cap B)$ and $W=V\cup\{e\}$. Then by \eqref{eq8}, one can derive $|V|+|S|>n$. Therefore,
\begin{align}\label{eq9}
|W|+|S|-1>n.
\end{align}
On the other hand, since  $SW\subset A$, it follows from \eqref{eq9} that 
\begin{align}\label{eq10}
|SW|<|W|+|S|-1.
\end{align}
We claim that $\min\{|S|,|W|\}>1$. If this were not the case, we would have either $|S|=1$ or $|W|=1$. If $|S|=1$, it would imply that $S=\{a\}$, for some $a\in A$. Thus,
\begin{align*}
|SW|=|aW|=|W|=|W|+|S|-1.
\end{align*}
This would lead to a contradiction with \eqref{eq10}.

Similarly, if $|W|=1$, it would mean $W=\{e\}$ and we would again have a contradiction, as 
\begin{align*}
|SW|=|Se|=|S|=|S|+|W|-1.
\end{align*}

So $\min\{|S|,|W|\}>1$. Applying Theorem \ref{ty} to $S$ and $W$, we may conclude that either $SW$ forms a progression or $SW$ is quasi-periodic. Since $SW\subset A$, then the first case implies a progression subset for $A$ and the second case yields a quasi-periodic subset for $A$. Both of these cases contradict our initial assumptions on $A$.
\end{proof}

\section{Proofs on matching subspaces in a field extension}

\begin{proof}[Proof of Theorem \ref{t1.8}]
Let $\mathcal{A}=\{a_1,\ldots,a_n\}$ be any bases of $A$. For every nonempty $J\subset\{1,\ldots,n\}$, denote  $S=\langle a_i| i\in J\rangle$, the subspace spanned by $\{a_i|i\in J\}$ and 
\begin{align*}
V_S=\underset{i\in J}{\bigcap}\left(a_i^{-1}A\cap B\right)=\{x\in B\mid  a_ix\in A\ \text{for all}\ i\in J\}.
\end{align*}
Clearly
$\dim_K S=|J|$. Define $W_S=K\oplus V_S$ yielding $\dim_K W_S=\dim_K V_S+1$ and $SW_S\subset A$. We claim that the following inequality holds true in each part of our theorem.
\begin{align*}
\dim_K\langle SW_S\rangle\geq\dim_K S+\dim_K W_S-1,
\end{align*}
and we validate each part independently.

\textbf{Part  1)} Let us begin by assuming that every algebraic element of $L$ is separable over $K$. Applying Theorem \ref{tt'} to $S$ and $W_S$, there must exist an intermediate field $K\subset M\subset L$ such that 
\begin{align}\label{eq11}
\dim_K\langle SW_S\rangle \geq\dim_K S+\dim_K W_S-\dim_K M,
\end{align}
where $M\langle SW_S\rangle=\langle SW_S\rangle$.

Let us introduce $W'_S=\langle M\cup V_S\rangle$. Using Theorem \ref{tt'} once again, we acquire the inequality
\begin{align}\label{eq12}
\dim_K\langle SW'_S\rangle \geq\dim_K S+\dim_K W'_S-\dim_K M',
\end{align}
where $M'$ is the stabilizer of $\langle SW_S\rangle$. 

Now, it becomes evident that $\langle SW'_S\rangle=\langle SW_S\rangle$ because
\begin{align*}
\langle SW'_S\rangle=&\langle S(M\cup V_S)\rangle\\
=&\langle S(M\cup W_S)\rangle\\
=&\langle SM\cup SW_SM'\rangle\\
=&M\langle S\cup SW_S\rangle\\
=&M\langle SW_S\rangle\\
=&\langle SW_S\rangle.
\end{align*}
Given that $\langle SW'_S\rangle=\langle SW_S\rangle$, it is clear that they share the same stabilizer, implying that $M=M'$. This, together with \eqref{eq12}, we obtain
\begin{align*}
\dim_K\langle SW'_S\rangle\geq\dim_K S+\dim_K W'_S-\dim_K M,
\end{align*}
entailing 
\begin{align}\label{eq13}
\dim_K\langle SW_S\rangle\geq& \dim_K S+\dim_K W_S-\dim_K M\notag\\
=& \dim_K S+\dim_K \langle M\cup V_S \rangle -\dim_K M.
\end{align}

Now, utilizing \eqref{eq13} together with the inclusion-exclusion principle for vector spaces, we derive
\begin{align}\label{eq14}
\dim_K\langle SW_S\rangle\geq& \dim_K S+\dim_K M+\dim_K V_S-\dim_K (M\cap V_S)-\dim_K M\notag\\
=&\dim_K S+\dim_K V_S-\dim_K(M\cap V_S).
\end{align}
Since $M\cap B=\{0\}$ and $V_S\subset B$, it follows that $M\cap V_S=\{0\}$. This along with \eqref{eq14}, we conclude that 
\begin{align*}
\dim_K\langle SW_S\rangle\geq \dim_K S+\dim_K V_S=\dim_K S+\dim_K W_S-1,
\end{align*}
as claimed.

A similar line of reasoning may be employed to prove that 
$$
\dim_K\langle SW_S\rangle\geq \dim_K S+\dim_K W_S-1,
$$
in case $n\leq 5$, applicable to general field extensions, in light of Remark \ref{rt''}.

\textbf{Part 2)} Assume, for the sake of contradiction, that
$$
\dim_K\langle SW_S\rangle <\dim_K S+\dim_K W_S-1.
$$
Then by Theorem \ref{tt}, there exists an intermediate field $K\subsetneq M\subset L$ with $M\langle SW_S\rangle=\langle SW_S\rangle$. Given that $\langle SW_S\rangle \subset A$, we may deduce
\begin{align}\label{eq15}
\dim_K M\leq\dim_K M\langle SW_S\rangle =\dim_K\langle SW_S\rangle\leq\dim_K A=p(K,L).
\end{align}
Since $\dim_{K} M\leq p(K,L)$ and $K\subsetneq  M$, it follows that $\dim_{K}M=p(K,L)=n$.  Choose a nonzero element $x\in \langle SW_S\rangle$. Then $Mx\subset M\langle SW_S\rangle =\langle SW_S\rangle \subset A$.  On the other hand, $\dim_K Mx=\dim_K M=n=\dim_K A$. Consequently, we arrive at the contradictory result that $Mx=A$, which contradicts the assumption that $A$ is not a linear translate of any nontrivial intermediate field within  $K\subset M\subsetneq  L$.

\textbf{Part 3) } Suppose, for the sake of contradiction, that
\begin{align}\label{eq16}
\dim_K\langle SW_S\rangle < \dim_K S+\dim_K W_S-1.
\end{align}
By applying Theorem \ref{tt}, we can deduce the existence of an intermediate field $K\subsetneq  M\subset L$ such that $M\langle SW_S\rangle =\langle SW_S\rangle$. Choose a nonzero element $x\in M\setminus K$. Then $x\langle SW_S\rangle\subset \langle SW_S\rangle$. Moreover, considering the dimension, we observe that $\dim_K x\langle SW_S\rangle=\dim_K \langle SW_S\rangle$. Hence $x\langle SW_S\rangle=\langle SW_S\rangle$. Since $A$ is a  Sidon subspace, we have $\dim_K(xA\cap A)\leq 1$. Given that $\langle SW_S\rangle \subset A$, we conclude that
\begin{align*}
1\geq&\dim_K (xA\cap A)\geq \dim_K \left\langle x\langle SW_S\rangle\cap \langle SW_S\rangle \right\rangle\\
=&\dim_K \langle SW_S\rangle =\dim_K \langle S\cup SV_S\rangle \geq\dim_K S.
\end{align*}
As a result, we find that $\dim_{K} S=1$ as $S\neq\{0\}$. Now, it follows from \eqref{eq16} that $\dim_K \langle SW_S\rangle < \dim_K W_S$, which is impossible.

In all parts (1)-(3), we have demonstrated that 
$$
\dim_K\langle SW_S\rangle\geq\dim_KS+\dim_KW_S-1,
$$ 
which implies that $\dim_KV_S\leq n-|J|$, for every $J\subset \{1,\ldots,n\}$. Therefore, according to Theorem \ref{tF}, we conclude that $A$ is matched to $B$.
\end{proof}

\begin{remark}
Theorem 4.2 from \cite{3} is a direct consequence of Theorem \ref{t2}, part (1). In other words, if $B$ is a primitive subspace in a separable field extension $K\subset L$, then it is evident that $B\cap M=\{0\}$, for every intermediate field $K\subset M\subsetneq L$. Therefore, the assumption of Theorem \ref{t1}, part (1) holds, and $A$ is matched to $B$. 
\end{remark}

\begin{remark}
Suppose $K\subset L$ is a field extension that is purely transcendental or $[L:K]=p$, for some prime $p$. In both cases, every algebraic element of $L$ is separable over $K$. That is, in purely transcendental situation, $L\setminus K$ has no algebraic element over $K$, and in the prime degree of extension, the extension is simple and thus separable. Therefore, it follows from Theorem \ref{t1.8} part (1) that $K\subset L$ possesses the linear matching property.
\end{remark}

\begin{example}
Consider the field extension $\mathbb{Q}\subset\mathbb{Q}(\sqrt[35]{2})$. Let $B$ be the $Q$-subspace of $\mathbb{Q}(\sqrt[35]{2})$ spanned by $\sqrt[35]{2}$, $\sqrt[35]{4}$ and $\sqrt[35]{8}$. Then, firstly, $\mathbb{Q}\subset\mathbb{Q}(\sqrt[35]{2})$ is separable, and secondly, $B\cap M=\{0\}$, for every intermediate field $\mathbb{Q}\subsetneq M\subset \mathbb{Q}(\sqrt[35]{2})$. Then by Theorem \ref{t1}, part (1), every 3-dimensional $\mathbb{Q}$-subspace $A$ of $\mathbb{Q}(\sqrt[35]{2})$ is matched to $B$. 
\end{example}
%2

\begin{proof}[Proof of Theorem \ref{t1.9}]
Given that $A$ is not matched to $B$, we can invoke Theorem \ref{tF}, which leads to the existence of a basis $\mathcal{A}=\{a_1,\ldots,a_n\}$ for $A$ and a subset $J\subset\{1,\ldots,n\}$ such that 
\[\dim_{K}\bigcap_{i\in J}\left(a_i^{-1}A\cap B\right)>n-|J|.\]
Set $V_J:=\underset{i\in J}{\bigcap}\left(a_i^{-1}\cap B\right)$, $W_J:=K\oplus V_J$ and  $S_J=\langle a_i\mid i\in J\rangle$. Then we have: 
\[\dim_{K}\langle S_JW_J\rangle<\dim_{K} S_J+\dim_{K} W_J-1.\]
Now, let $M$ denote the 1-atom of $W_J$. We can deduce from Theorem \ref{tw} that $M$ is a subfield of $L$ properly containing $K$. That is, $B\oplus K$ contains a subspace $W_J$ whose $1$-atom $M$ is a subfield of $L$ properly containing $K$, as asserted. 
\end{proof}

\section{Two conjectures}
\subsection{Linear analogue of Theorem \ref{t1}, part (7)}
In Theorem \ref{t1} part (7), we basically deal with matchability between sets $A$ and $B$  whose all nonempty subsets $S\subset A$  and $T\subset B$ satisfy the condition $|ST|\geq|S|+|T|-1$. It is evident that this result relies on Proposition \ref{p2.3}. It is tempting to speculate that Proposition \ref{p2.3} can be extended to a linear setting. If such an extension is successfully established, it could be employed to provide a linear analogue for Theorem \ref{t1} part (7) in some obvious manners. Having said this, we formulate the following conjecture:

\begin{conjecture}
Let $K\subset  L$ be a field extension and $A, B\subset L$ be a $K$-subspace of $L$ of finite positive dimensions. Assume further that for every $a, b \in L$ and every nontrivial proper finite intermediate subfield $K\subset M\subsetneq L$, we have
$$
\dim_{K}(aM\cap A) + \dim_{K} (bM\cap B)\leq [M:K]+1.
$$
Then for all $K$-subspaces $S\subset A$ and $T\subset B$, the following holds
$$
\dim_{K}\langle ST\rangle\geq\dim_{K} S+\dim_{K} T-1.
$$
\end{conjecture}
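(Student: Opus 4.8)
The plan is to transport the proof of Proposition \ref{p2.3} to the linear setting, replacing cardinalities by $K$-dimensions, cosets $aH$ by linear translates $aM$ of an intermediate field $M$, the $H$-hole count by the codimension of $S$ inside its $M$-saturation $\langle SM\rangle$, and Kneser's Theorem by its linear analogue. First I would argue by contradiction: suppose there are nonzero finite-dimensional $K$-subspaces $S\subset A$ and $T\subset B$ (the statement being understood, as in Proposition \ref{p2.3}, for nonzero subspaces) with $\dim_K\langle ST\rangle<\dim_K S+\dim_K T-1$. Applying Theorem \ref{tt'} to $S$ and $T$ — assuming for now that every algebraic element of $L$ is separable over $K$, or that $\dim_K S\le 5$ so that Remark \ref{rt''} applies — produces the stabilizer $M=\{x\in L\mid x\langle ST\rangle\subset\langle ST\rangle\}$ together with the bound $\dim_K\langle ST\rangle\ge\dim_K S+\dim_K T-\dim_K M$. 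Comparing with the deficiency hypothesis forces $\dim_K M\ge 2$, and since $M$ stabilizes the finite-dimensional space $\langle ST\rangle$ it is itself a finite intermediate field (take $0\ne x\in\langle ST\rangle$; then $Mx\subset\langle ST\rangle$). One checks it is proper — automatic when $[L:K]=\infty$, and a boundary case to be treated separately when $L$ is finite over $K$ — so the hypothesis of the conjecture applies to $M$.

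Next I would promote the basic Kneser bound to the refined form mirroring the second formulation of Kneser's Theorem used in Proposition \ref{p2.3}. Because $M$ stabilizes $\langle ST\rangle$, the spaces $\langle SM\rangle$ and $\langle TM\rangle$ are the smallest $M$-subspaces containing $S$ and $T$, and using $MM=M$ and $M\langle ST\rangle=\langle ST\rangle$ one gets $\langle\,\langle SM\rangle\langle TM\rangle\,\rangle=\langle ST\rangle$, so the two $M$-saturations have the same stabilizer $M$. Applying Theorem \ref{tt'} once more, this time to $\langle SM\rangle$ and $\langle TM\rangle$, yields $\dim_K\langle ST\rangle\ge\dim_K\langle SM\rangle+\dim_K\langle TM\rangle-\dim_K M$. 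Combining with the deficiency hypothesis gives the linear holes bound
\[
(\dim_K\langle SM\rangle-\dim_K S)+(\dim_K\langle TM\rangle-\dim_K T)\le [M:K]-2,
\]
the exact analogue of inequality \eqref{eqqq***}.

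Then I would run the translate-counting step. Fix nonzero $a\in S$ and $b\in T$; since $a\in S\subset\langle SM\rangle$ and $\langle SM\rangle$ is $M$-stable, the translate $aM$ lies in $\langle SM\rangle$. The $K$-linear projection of $aM$ into the quotient $\langle SM\rangle/S$ has kernel $aM\cap S$, whence $\dim_K(aM\cap S)\ge[M:K]-(\dim_K\langle SM\rangle-\dim_K S)$, and likewise for $b$ and $T$. Because $S\subset A$ and $T\subset B$, summing these and invoking the holes bound gives
\[
\dim_K(aM\cap A)+\dim_K(bM\cap B)\ge 2[M:K]-([M:K]-2)=[M:K]+2,
\]
contradicting the standing hypothesis $\dim_K(aM\cap A)+\dim_K(bM\cap B)\le[M:K]+1$. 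This contradiction proves the claim.

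I expect the main obstacle to be precisely the passage from the separable to the general case. The whole argument hinges on a Kneser-type inequality that records the dimension $\dim_K M$ of the stabilizer, which at present is available in full strength only through Theorem \ref{tt'}, whose hypotheses require separability (or $\dim_K\le 5$ via Remark \ref{rt''}). For a genuinely inseparable extension of large degree one has only the dichotomy of Theorem \ref{tt}, whose second alternative supplies the stabilizing field $M$ but no quantitative bound $\dim_K\langle ST\rangle\ge\dim_K\langle SM\rangle+\dim_K\langle TM\rangle-\dim_K M$; without it the holes bound cannot be extracted and the argument stalls. Thus the plan settles the conjecture under separability, and the remaining difficulty — and the reason the statement is posed as a conjecture — is establishing a linear Kneser theorem of sufficient strength in the inseparable regime, together with the routine check disposing of the boundary case $M=L$ when $L/K$ is finite.
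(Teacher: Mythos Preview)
The paper does not prove this statement: it is posed as an open conjecture in Section~5, so there is no proof in the paper against which to compare. Your plan is precisely the linearization of the paper's own proof of Proposition~\ref{p2.3} (the group-theoretic result that motivates the conjecture), and under the separability hypothesis of Theorem~\ref{tt'} (or $\dim_K\le 5$ via Remark~\ref{rt''}) your argument is sound: the double application of the linear Kneser inequality to extract the holes bound $(\dim_K\langle SM\rangle-\dim_K S)+(\dim_K\langle TM\rangle-\dim_K T)\le[M:K]-2$, followed by the translate-count via the quotient map $aM\to\langle SM\rangle/S$, mirrors the group argument step for step. Your diagnosis that the inseparable regime is the obstruction --- because Theorem~\ref{tt} supplies the stabilizing field but not the quantitative bound --- is exactly why the authors leave the statement open.

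One genuine caution on what you call the ``routine check disposing of the boundary case $M=L$'': this is not routine, and in fact the conjecture as written appears to fail there. If $[L:K]$ is finite with no nontrivial proper intermediate field (e.g.\ $[L:K]=p$ prime) the hypothesis is vacuous, yet taking $A=B=S=T=L$ gives $\dim_K\langle ST\rangle=p<2p-1=\dim_K S+\dim_K T-1$. The group version of Proposition~\ref{p2.3} avoids this because its hypothesis ranges over all nontrivial finite subgroups including $H=G$, which forces $|A|+|B|\le|G|+1$; the linear conjecture explicitly restricts to $M\subsetneq L$, so no analogous size bound is imposed. Thus the statement likely needs either the clause $M=L$ reinstated (when $[L:K]<\infty$) or an ambient hypothesis $\dim_K A+\dim_K B\le[L:K]+1$ added. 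This is a formulation issue in the conjecture rather than a flaw in your strategy, but it does mean the boundary case cannot simply be dismissed.
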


\subsection{Matching in Chowla Subspaces}

In light of Hamidoune's findings \cite{19} regarding matchable sets $A$ and $B$, with $B$ being a Chowla subset, we aim to formulate a linear analogue of Chowla subsets and explore potential linear matchings in this setting. Consider a field extension $K\subset L$ and let $A$ be a $K$-subspace of $L$. We define $A$ as a Chowla subspace if, for every $a\in A\setminus\{0\}$, we satisfy the condition $[K(a):K]\geq \dim_{K} A+1$. Notably, within a finite separable field extension, every primitive subspace is a Chowla subspace. Therefore, it follows, based on Theorem 2.7 in \cite{1}, that the maximum attainable dimension of a Chowla $K$-subspace in $L$ is greater than or equal to \[[L:K]-\max_{K\subset M\subsetneq L}[M:K],\]
where $M$ is a proper intermediate field.

Now, a natural question to ponder is: In separable (or general) field extensions, what is the upper bound on the size of a Chowla subspace? Additionally, it would be quite valuable 
 if one could investigate the following question, serving as a linear counterpart to Theorem \ref{Chowla}, which asserts that in the abelian group context, if $|A| = |B|$ with $B$ being a Chowla subset, then $A$ must be matched to $B$.

\begin{conjecture}
Let $K\subset L$ be a field extension and $A$ and $B$ be two $n$-dimensional $K$-subspaces of $L$. If $B$ is a Chowla subspace, then $A$ is matched to $B$.
\end{conjecture}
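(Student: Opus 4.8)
\emph{Proof proposal.} The plan is to mirror the proof of Theorem~\ref{t1}, part~(6), which established the analogous group statement (Theorem~\ref{Chowla}), transporting each step to the linear setting via the machinery of Section~2.2. I would argue by contradiction through the dimension criterion of Theorem~\ref{tF}. Suppose $A$ is not matched to $B$. Then there is a basis $\mathcal{A}=\{a_1,\dots,a_n\}$ of $A$ and a nonempty $J\subseteq\{1,\dots,n\}$ with
\[
\dim_K\bigcap_{i\in J}\left(a_i^{-1}A\cap B\right)>n-|J|.
\]
Writing $S=\langle a_i\mid i\in J\rangle$, $V_S=\bigcap_{i\in J}(a_i^{-1}A\cap B)$ and $W_S=K\oplus V_S$, one checks as in the proof of Theorem~\ref{t1.8} that $V_S$ is a $K$-subspace of $B$ with $V_S\cap K=\{0\}$ (since $1\notin B$), that $\langle SW_S\rangle\subseteq A$, and that the displayed inequality is equivalent to $\dim_K\langle SW_S\rangle<\dim_K S+\dim_K W_S-1$. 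Thus it suffices to rule out this deficient inequality, i.e.\ to prove the Kneser-type bound $\dim_K\langle SW_S\rangle\geq\dim_K S+\dim_K W_S-1$, whence $\dim_K V_S\leq n-|J|$ follows upon comparing with $\dim_K\langle SW_S\rangle\leq\dim_K A=n$.

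To exploit that $B$ is a Chowla subspace, note first that every nonzero $v\in V_S\subseteq B$ satisfies $[K(v):K]\geq n+1$, so the minimal degree $\tau(V_S):=\min_{0\neq v\in V_S}[K(v):K]$ obeys $\tau(V_S)\geq n+1>n\geq\dim_K V_S$, and the subfield $K(V_S)$ generated by $V_S$ satisfies $[K(V_S):K]>n$ (possibly infinite). The cleanest route is to establish a linear analogue of Hamidoune's Theorem~\ref{tx}: for finite-dimensional $K$-subspaces one would like
\[
\dim_K\langle S\cup SV_S\rangle\geq\dim_K S+\min\{\dim_K V_S,\ \tau(V_S)\}
\]
whenever $\langle S\cup SV_S\rangle\neq\langle S\,K(V_S)\rangle$. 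Granting such a statement, the degenerate case $\langle S\cup SV_S\rangle=\langle S\,K(V_S)\rangle$ is impossible: it would force $s\,K(V_S)\subseteq A$ for any $0\neq s\in S$, hence $[K(V_S):K]=\dim_K sK(V_S)\leq\dim_K A=n$, contradicting $[K(V_S):K]>n$. In the remaining case the displayed bound, together with $\min\{\dim_K V_S,\tau(V_S)\}=\dim_K V_S$, gives $\dim_K\langle SW_S\rangle=\dim_K\langle S\cup SV_S\rangle\geq\dim_K S+\dim_K V_S=\dim_K S+\dim_K W_S-1$, exactly the bound needed.

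An alternative, staying within results already stated, is to invoke Theorem~\ref{t1.9} directly: since $A$ is not matched to $B$, the space $B\oplus K$ contains a subspace $W_S=K\oplus V_S$ whose $1$-atom $M$ is a nontrivial intermediate field $K\subsetneq M\subseteq L$ (this is Theorem~\ref{tw} applied to $W_S$ and $S$). One then tries to produce from $M$ a nonzero element of $B$ lying in a field of degree at most $n$: if one can show $[M:K]\leq n$ together with $M\cap V_S\neq\{0\}$ (for instance, that the $1$-atom is contained in $W_S=K\oplus V_S$, so that any $m\in M\setminus K$ decomposes as $m=c+v$ with $0\neq v=m-c\in M\cap V_S\subseteq M\cap B$), then $0\neq v\in B$ with $[K(v):K]\leq[M:K]\leq n<n+1$ contradicts the Chowla hypothesis.

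I expect the main obstacle to be precisely where both routes stall, and it is presumably why the statement is posed as a conjecture: there is no linear counterpart of Theorem~\ref{tx} among the tools of Section~2.2, and the linear Kneser theorems (Theorems~\ref{tt'},~\ref{tt}) produce only a stabilizer or a $1$-atom that is \emph{a priori} a subspace of $L$, neither visibly contained in $W_S$ nor of controlled degree. Concretely, the stabilizer $M'$ of $\langle SW_S\rangle$ is easily seen to be a field with $[M':K]\mid\dim_K\langle SW_S\rangle\leq n$ (as $\langle SW_S\rangle\subseteq A$ is an $M'$-module), so its degree is bounded for free; but $M'$ need not meet $B$, and one cannot immediately locate a low-degree element of $B$ inside it. Conversely, the $1$-atom $M$ of $W_S$ is more plausibly tied to $V_S\subseteq B$, yet bounding its degree by $n$ is not supplied by the stated machinery. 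Bridging this gap—either by proving the sought linear analogue of Theorem~\ref{tx}, or by showing that the intermediate field extracted from the deficient inequality must contain a nonzero element of $B$ of degree at most $n$—is the crux, and is where I would concentrate the effort.
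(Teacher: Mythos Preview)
The statement is a \emph{conjecture} in the paper; no proof is offered there, and none is claimed. So there is no ``paper's own proof'' to compare against. What you have written is not a proof either, and you say so yourself: both routes you sketch terminate at an explicitly named gap. That is an honest and accurate assessment of the situation.

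On the substance of your two approaches: the first requires a linear analogue of Hamidoune's Theorem~\ref{tx}, which is not available in the paper (nor, to my knowledge, in the literature), so this is genuinely open. The second route is more subtle than you perhaps indicate. The $1$-atom $M$ of $W_S$ produced via Theorem~\ref{tw} is a subspace of $L$, not a priori of $W_S$; nothing in the definition of a $1$-atom forces it to sit inside the space whose connectivity it realizes, so your parenthetical hope that ``the $1$-atom is contained in $W_S=K\oplus V_S$'' would itself need proof. Your observation that the \emph{stabilizer} $M'$ of $\langle SW_S\rangle$ automatically has $[M':K]\mid\dim_K\langle SW_S\rangle\le n$ is correct and is the easy half; the hard half---locating a nonzero element of $B$ inside some intermediate field of degree $\le n$---is exactly the missing ingredient, and is precisely why the paper leaves the statement as a conjecture.
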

\section{acknowledgement}
We are thankful to David Grynkiewicz for discussions on Kneser's addition theorem and its implications, which have been truly valuable.


\begin{thebibliography}{10}



\bibitem{1}
M. Aliabadi and K. Filom, Results and questions on matchings
  in abelian groups and vector subspaces of fields, \emph{Journal of Algebra}
  \textbf{598} (2022), 85--104. 

\bibitem{2}
M. Aliabadi, M. Hadian, A. Jafari, On matching property for
  groups and field extensions, \emph{Journal of Algebra and Its Applications}
  \textbf{15} (2016), no.~01, 1650011.

\bibitem{3}
M. Aliabadi, M.V. Janardhanan, On local matching property
  in groups and vector space, \emph{Australas. J. Combin.} \textbf{70} (2018),
  75--85.

\bibitem{4}
M. Aliabadi, M. V. Janardhanan, On matchable subsets in
  abelian groups and their linear analogues, \emph{Linear Algebra and its
  Applications} \textbf{582} (2019), 138--155.

\bibitem{5}
M. Aliabadi, J. Kinseth, C. Kunz, H. Serdarevic, C.
  Willis, Conditions for matchability in groups and field extensions,
  \emph{Linear and Multilinear Algebra} \textbf{71} (2023), no.~7, 1182--1197.
\bibitem{Shira}
 M.Aliabadi, S. Zerbib, Matchings in matroids over abelian groups, https://arxiv.org/abs/2202.07719, To appear in J. Algebraic. Combinatorics.
\bibitem{6}
N. Alon, C. K. Fan, D. Kleitman, J. Losonczy, Acyclic
  matchings, \emph{Advances in Mathematics} \textbf{122} (1996), no.~2, 234--236.

\bibitem{7}
C. Bachoc, O. Serra, G. Z{\'e}mor, An analogue of
  Vosper's theorem for extension fields, \emph{Mathematical Proceedings of the
  Cambridge Philosophical Society}, vol. \textbf{163}, Cambridge University Press, (2017),
  pp.~423--452.

\bibitem{8}
C. Bachoc, O. Serra, G. Z{\'e}mor, Revisiting
  Kaeser's theorem for field extensions,\emph{Combinatorica} \textbf{38} (2018),
  no.~4, 759--777.

\bibitem{9}
{\'E}. Balandraud, Une variante de la m{\'e}thode
  isop{\'e}rim{\'e}trique de hamidoune, appliqu{\'e}e au th{\'e}oreme de
  Kneser, \emph{Annales de l'institut Fourier}, vol.\textbf{58}, (2008), pp.~915--943.
\bibitem{10}
 A. Cauchy, \emph{Recherches sur les nombres}, J. Ecole Polytechnique \textbf{9} (1813), 99--116.
  
\bibitem{11}
I. Chowla, A theorem on the addition of residue classes: Application
  to the number $\gamma$ (k) in Waring’s problem, \emph{Proceedings of the Indian
  Academy of Sciences-Section A}, vol.\textbf{2}, Springer India New Delhi, 1935,
  pp.~242--243.

\bibitem{12}
T. Cochrane, M. Ostergaard, C. Spencer, Cauchy-davenport
  theorem for abelian groups and diagonal congruences, \emph{Proceedings of the
  American Mathematical Society} \textbf{147} (2019), no.~8, 3339--3345.
\bibitem{12*}
I. Czerwinski, A. Pott, Sidon sets, sum-free sets and linear codes, \emph{https://arxiv.org/abs/2304.07906} (2023).


\bibitem{13}
H. Davenport, On the addition of residue classes, \emph{Journal of the
  London Mathematical Society} (1935), no.\textbf{1}, 30--32.

\bibitem{14}
S. Eliahou, C. Lecouvey, Matchings in arbitrary groups,
  \emph{Advances in Applied Mathematics} \textbf{40} (2008), no.~2, 219--224.

\bibitem{15}
S. Eliahou, C. Lecouvey, Matching subspaces in a field
  extension,  \emph{Journal of Algebra} \textbf{324} (2010), no.~12, 3420--3430.

\bibitem{16}
C. K. Fan, J. Losonczy, Matchings and canonical forms for
  symmetric tensors, \emph{Advances in Mathematics} \textbf{117} (1996), no.~2,
  228--238.

\bibitem{17}
D. J. Grynkiewicz, \emph{Structural additive theory}, vol.\textbf{30}, Springer,
  (2013).

\bibitem{18}
Y. O. Hamidoune, A generalization of an addition theorem of
  Shatrowsky, \emph{European Journal of Combinatorics} \textbf{13} (1992), no.~4,
  249--255.

\bibitem{19}
Y. O. Hamidoune, Counting certain pairings in arbitrary groups,
 \emph{Combinatorics, Probability and Computing} \textbf{20} (2011), no.~6, 855--865.

\bibitem{20}
X. D. Hou, On a vector space analogue of Kneser's theorem, \emph{Linear
  Algebra and its Applications} \textbf{426} (2007), no.~1, 214--227.

\bibitem{21}
X. D. Hou, K. Leung, Q. Xiang, A generalization of an
  addition theorem of Kneser, \emph{Journal of Number Theory} \textbf{97} (2002),
  no.~1, 1--9.

\bibitem{23}
J.H.B. Kemperman, On small sumsets in an abelian group, \emph{Acta Mathematica}
  \textbf{103} (1960), 63--88.

\bibitem{22}
J.H.B. Kemperman, On complexes in a semigroup, \emph{Indagationes Mathematicae}
  (Proceedings), vol.\textbf{59}, Elsevier, (1956), pp.~247--254.

\bibitem{24}
J. Losonczy, On matchings in groups, \emph{Advances in Applied Mathematics}
  \textbf{20} (1998), no.~3, 385--391.

\bibitem{25}
M. Nathanson, \emph{Additive number theory: Inverse problems and the
  geometry of sumsets}, Springer-Verlag New York, (1996).

\bibitem{26}
J. Olson, On the sum of two sets in a group, \emph{Journal of Number
  Theory} \textbf{18} (1984), no.~1, 110--120.

\bibitem{27}
R. Rado, A theorem on independence relations, \emph{The Quarterly Journal
  of Mathematics} (1942), no.~1, 83--89.

\bibitem{28}
E. K. Wakeford, On canonical forms, \emph{Proceedings of the London Mathematical
  Society} \textbf{2} (1920), no.~1, 403--410.

\end{thebibliography}
\end{document}